\newcommand{\ci}[1]{\mathscr{#1}}%corsivo inglese
\newcommand{\g}[1]{\mathfrak{#1}}
\newcommand{\alfa}{\alpha}
\newcommand{\R}{\mathbf{R}}
\newcommand{\C}{\mathbf{C}}
\renewcommand{\H}{\mathbf{H}}
\newcommand{\bra}{\left\langle}
\newcommand{\ket}{\right\rangle}
\renewcommand{\phi}{\varphi}
\newcommand{\mi}{\mu}
\DeclareMathOperator{\End}{End}
\newtheorem{proposizione}{Proposition}[section]
\newtheorem{teorema}[proposizione]{Theorem}
\newtheorem{lemma}[proposizione]{Lemma}
\newtheorem*{lemma*}{Lemma}
\theoremstyle{definition}
\newtheorem{definizione}[proposizione]{Definition}
\theoremstyle{remark}
\newtheorem{osservazione}[proposizione]{Remark}
\title{Geometric structures arising from the deformation of groups of Heisenberg type}
\author{Claudio Afeltra}
\date{}
\begin{document}

\maketitle

\begin{abstract}
 Motivated by the desire of finding a geometric interpretation to the Yamabe equation on groups of Heisenberg type, we define a geometric structure on manifolds modelled locally on these groups, which we call contact structure of Heisenberg type. In the case of the Heisenberg group is equivalent to contact Riemannian manifolds.
 We define a natural connection on these structures, we compute the formula for the conformal change of scalar curvature, and introduce the Yamabe problem for these manifods.
\end{abstract}

\section{Introduction}
Analysis on Lie groups plays an important role in modern analysis, allowing to better understand which of the several properties of $\R^n$ are required for specific results, and having applications to differential geometry, partial differential equations, physics and other areas of mathematics and natural sciences.

Most results on Lie groups hold for specific subclasses of Lie groups.
A very studied family of Lie groups is the one of Carnot groups, namely simply connected groups $G$ whose Lie algebra $\g{g}$ has a stratification $\g{g}=\sum_{i=1}^kV_k$ with $[V_1,V_i]=V_{i+1}$ for all $i$ ($k$ being called the step of $G$).
The most important and studied example of Carnot group is the Heisenberg group $\H^n$, which is a step two Carnot group (the smallest possible nontrivial step) with a second step of dimension one, properties that make its study particularly convenient and in many cases allow for explicit computations not possible in the general case.

In \cite{K} Kaplan introduced a class of step two Carnot groups called groups of Heisenberg type which share many of properties of $\H^n$, and on which many of the computations on $\H^n$ can be repeated or adapted (see \cite{CDKR} Chapter 18 in \cite{BLU}).

One of these properties pertains to the equation
\begin{equation}\label{Equazione}
 -\Delta u = u^{\frac{Q+2}{Q-2}}, \;\;\; u>0
\end{equation}
where $\Delta$ is the sublaplacian and $Q$ is the homogeneous dimension (see Section \ref{SezionePreliminare} for the definition of such concepts).
Equation \eqref{Equazione} in the case of $\H^n$ is very important in differential geometry because it is equivalent to find the contact forms on $\H^n$ solving the CR Yamabe problem, that is the problem of findind a contact form on a CR manifold whose Webster curvature is constant (see \cite{JL1} or \cite{DT}).
On $\H^n$ this equation is known to have the solution
$$U(x,t) = C_n\left(\frac{1}{(1+|x|^2)^2 + 16|t|^2}\right)^{\frac{Q-2}{4}}$$
with $(x,t)\in\R^{2n}\times\R=\H^n$. Geometrically this solution corresponds to the standard contact form on $S^{2n-1}\subset\C^n$ pulled back to $\H^n$ through the Cayley transform (a transformation analogous to the stereographic projection).
$U$ and its translation and dilations are the unique solutions to Equation \eqref{Equazione} in $L^{\frac{2Q}{Q-2}}$ (see \cite{JL2}). It is an important open problem to find whether the same holds without integrability hypotheses.

Regarding groups of Heisenberg type, Garofalo and Vassilev discovered that Equation \eqref{Equazione} has a solution with the same form of the one on $\H^n$,
$$U(x,t) = C_G\left(\frac{1}{(1+|x|^2)^2 + 16|t|^2}\right)^{\frac{Q-2}{4}}$$
with $(x,t)\in\R^{2n}\times\R^k\simeq G$ (see \cite{GV}).
Later Yang (see \cite{Y}) proved that the classification result of Folland and Stein for $\H^n$ can be generalized to groups of Heisenberg type, that is that $U$ and its translation and dilations are the unique solutions to Equation \eqref{Equazione} in $L^{\frac{2Q}{Q-2}}$.

Despite all these similarities between the study of Equation \eqref{Equazione} on $\H^n$ and on groups of Heisenberg type, until now on the more general situation there is not a geometric interpretation of this equation analogous to the one on $\H^n$; that is, there is not a geometric structure allowing notions of scalar curvature and conformal change, such that the formula for the conformal change of the scalar curvature on $G$ is equivalent to Equation \eqref{Equazione}.

Our aim in this work is to find such a geometric interpretation.
For this purpose, we introduce a geometric structure which we call contact structure of Heisenberg type, locally modelled on a group of Heisenberg type. It consists of a subbundle $\g{H}$ of $TM$ with a subriemannian metric $g$ and of a vector space of forms $\ci{V}$ such that, associating to every $\theta\in\ci{V}$ the tensor $J_{\theta}$ defined by $g(X,J_{\theta}Y)=d\theta(X,Y)$, the association $\theta\mapsto J_{\theta}$ is a Clifford algebra, and furthermore a condition allowing the definition of Reeb vector fields analogous to the ones of contact geometry is satisfied (see Section \ref{SezioneDefinizione} for the complete definition).
In the case $G=\H^n$, our notion is equivalent to contact Riemannian manifolds.
This structure admit a natural notion of conformal change, as shown in Section \ref{SezioneDefinizione}.

In Section \ref{SezioneConnessione} we define a connection on contact manifolds of Heisenberg type. In the case of contact Riemannian manifolds our connection does not coincide with the Tanno connection, but with the Hermitian Tanno connection introduced by Nagase in \cite{N}. In the case of CR manifolds they both coincide with the Tanaka-Webster connection.

Finally in Section \ref{SezioneFormula} we define the scalar curvature in a manner analogous to Riemannian geometry, by contracting twice the curvature tensor, and we compute the formula for the conformal change of the scalar curvature, showing that up to an irrelevant constant it is equivalent to Equation \eqref{Equazione}.
We also introduce the Yamabe problem in this context, and notice that like in the Riemannian and in CR case, we can define a Yamabe constant $\ci{Y}(M)$ such that $\ci{Y}(M)\le\ci\ci{Y}(G)$, and such that if the strict inequality holds for a compact manifold $G$, then the Yamabe problem has solution.
Unlike the Riemannian and CR case, we are not able to find in general a compact manifold $M$ for which, analogously to $S^n$ in Riemannian geometry and $S^{2n-1}\subset\C^n$ in CR geometry, $\ci{Y}(M)=\ci\ci{Y}(G)$ (and we do not know whether it exists), but only for a subclass called groups of Iwasawa type. These groups admit a notion of spherical inversion which allows us to build a contact manifold of Heisenberg type diffeomorphic to $S^{2n+k}$ by glueing two copies of $G$ (see Section \ref{SezioneSfera}).

We point out that the idea of defining a structure on a manifold locally modelled on groups of Heisenberg type is not new, but in \cite{BGRV} the authors defined a structure which they called H-type foliation. Despite the similar goal, their structure does not coincide with ours and in particular it does not admit a notion of conformal change, which is our main motivation; the authors there were motivated by the generalization of other properties of groups of Heisenberg type to a non-flat case.

There are also connections between our work and the one of Biquard in \cite{B}, where he defined a geometric structure called quaternionic contact manifold.

\section{Groups of Heisenberg type}\label{SezionePreliminare}
In this section we will recall the definition and some known facts about groups of Heisenberg type.

Let $\g{g}$ be a real Lie algebra with a scalar product, $\g{z}$ be the center thereof, and $\g{v}=\g{z}^{\perp}$ the orthogonal of the center. Suppose that $[\g{v},\g{v}]\subset\g{z}$.
We define $J:\g{z}\to\End(\g{v})$ as
$$\bra J_TX,Y\ket = \bra T, [X,Y]\ket$$
for $T\in\g{z}$, $X,Y\in\g{v}$.
Then $\g{g}$ is said to be a Lie algebra of Heisenberg type if $J_T$ is an isometry for every $T\in\g{z}$ with $|T|=1$.
Equivalently, $\g{g}$ is a Lie algebra of Heisenberg type if for every $T\in\g{z}$
\begin{equation}\label{Antisimm}
 J_T^* = -J_T
\end{equation}
and
\begin{equation}\label{RelazioneCliffQ}
 J_T^2 = -|T|^2I.
\end{equation}
Condition \eqref{RelazioneCliffQ}, by polarization, and applying \eqref{Antisimm}, is equivalent to
$$ J_{T_1}J_{T_2} + J_{T_2}J_{T_1} = -2\bra T_1,T_2\ket I$$
for $T_1,T_2\in\g{z}$.

A Lie group $G$ with a scalar product on its Lie algebra $\g{g}$ is said a group of Heisenberg type if it is simply connected and $\g{g}$ is of Heisenberg type.

Therefore groups of Heisenberg type are a subclass of step two Carnot groups.

We recall that, given two real vector space $V$ and $W$ endowed with a scalar product, a homomorphism $J:V\to\End(W)$ is called a Clifford module if $J_v$ is antisymmetric for every $v$ and the Clifford relation
$$J_vJ_u+J_uJ_v = -2\bra v,u\ket I$$
holds for every $v,u\in V$.

Therefore in a Lie algebra $\g{g}=\g{v}\oplus\g{z}$ of Heisenberg type $\g{v}$ is a Clifford module over $\g{z}$. This allows to apply the theory of Clifford modules.
Vice versa, given a Clifford module $W$ over $V$, the space $W\times V$ with the bracket defined by $g([(X_1,T_1),(X_2,T_2)],S) = g(J_SX_1,X_2)$ for $X_1,X_2\in W$, $T_1,T_2,S\in V$ is a Lie algebra of Heiseneberg type.

Let us call $\dim(\g{v})=2n$ and $\dim(\g{z})=k$ (\eqref{RelazioneCliffQ} obviously implies that $\dim(\g{v})$ is even).
Then $\g{g}$, being a stratified Lie algebra, has the family of automorphisms
$$\delta_{\lambda}(X,T) = (\lambda X,\lambda^2T)$$
for $\lambda\in(0,\infty)$, and it holds that
$$(\delta_{\lambda}\mi) = \lambda^Q\mi$$
where $\mi$ is a Haar measure and $Q=2n+2k$ is called the homogeneous dimension of $\g{g}$.

We define the sublaplacian of $G$ as the differential operator
$$\Delta = \sum_{\alfa=1}^{2n}X_{\alfa}^2$$
where $X_1,\ldots,X_{2n}$ is an orthonormal basis of $\g{g}$ (it is elementary to verify that the definition does not depend on the basis).

The equation
$$-\Delta u = u^{\frac{Q+2}{Q-2}} ,\;\;\; u>0$$
has the solution
\begin{equation}\label{SoluzioneGarofaloLanconelli}
 U(x,t) = C_G\left(\frac{1}{(1+|x|^2)^2 + 16|t|^2}\right)^{\frac{Q-2}{4}}
\end{equation}
found by Garofalo and Vassilev in \cite{GV}, and which, thanks to a result of Yang in \cite{Y}, up to left translation and dilation, is the only solution in $L^{\frac{2Q}{Q-2}}$.

On groups of Heisenberg type a spherical inversion analogous to the one in $\R^n$ and on the Heisenberg group has been introduced in \cite{CDKR}:
\begin{equation}\label{DefinizioneInversione}
 \sigma(X,T) = \left(\left(-|X|^2I + 4J_T\right)^{-1}X , -\frac{T}{|X|^4+16|T|^2}\right)
\end{equation}
for $(X,T)\in\g{g}\setminus\{0\}$, and carried to $G$ by exponentiation.
$\sigma$ has meaningful properties only on a subclass of the groups of Heisenberg type, as the following result shows (see Theorems 4.2 and 5.1 in \cite{CDKR}).

\begin{teorema}\label{TeoremaIwasawa}
 The following are equivalent.
 \begin{itemize}
  \item $G$ is the nilpotent part of the Iwasawa decomposition of a rank one simple group.
  \item for every $X\in\g{v}$ and $T_1,T_2\in\g{z}$ there exist $T\in\g{z}$ such that $J_TX=J_{T_1}J_{T_2}X$.
  \item $\sigma$ preserves the horizontal distribution.
 \end{itemize}
\end{teorema}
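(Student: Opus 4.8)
The plan is to prove (2) $\Leftrightarrow$ (3) by a direct computation with $\sigma$, and (1) $\Leftrightarrow$ (2) by classifying the Clifford modules that satisfy (2) through composition algebras. The operative instance of condition (2) is $T_1\perp T_2$: in that case, by \eqref{Antisimm} and the Clifford relation, $J_{T_1}J_{T_2}$ is antisymmetric and $(J_{T_1}J_{T_2})^2 = -|T_1|^2|T_2|^2 I$, so it has the formal properties of a single operator $J_S$; the requirement that $J_{T_1}J_{T_2}X$ lie in $\{J_T X : T \in \g{z}\}$ is the so-called $J^2$-condition.

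For (2) $\Leftrightarrow$ (3) I would first record that $(-|X|^2 I + 4J_T)^{-1} = (|X|^4 + 16|T|^2)^{-1}(-|X|^2 I - 4J_T)$, which follows from \eqref{Antisimm} and \eqref{RelazioneCliffQ} because $(-|X|^2 I + 4J_T)(-|X|^2 I - 4J_T) = (|X|^4 + 16|T|^2)I$. In exponential coordinates the group law gives the horizontal distribution at $(X,T)$ as $\{(V, \tfrac12[X,V]) : V \in \g{v}\}$, so a tangent vector is horizontal precisely when its $\g{z}$-component equals $\tfrac12[X,\cdot]$ applied to its $\g{v}$-component. Differentiating $\sigma$, applying it to a horizontal vector, and imposing this same relation at $\sigma(X,T)$ reduces, after simplification, to exactly the $J^2$-condition; this is the content of Theorem 5.1 of \cite{CDKR}. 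The step is computationally heavy but conceptually routine.

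For (1) $\Rightarrow$ (2) I would use the root space structure of a rank one simple Lie algebra $\g{l} = \g{k}\oplus\g{a}\oplus\g{n}$ with $\dim\g{a}=1$: the restricted roots lie in $\{\alpha,2\alpha\}$, and $\g{n}=\g{g}_\alpha\oplus\g{g}_{2\alpha}$ with $\g{v}=\g{g}_\alpha$ and $\g{z}=\g{g}_{2\alpha}$, the latter being central because $3\alpha$ is not a root. Writing the $J_T$ through the bracket and the Killing form, the $J^2$-condition becomes a consequence of the Jacobi identity together with $\g{g}_{3\alpha}=0$, which is exactly what forces the second order action $J_{T_1}J_{T_2}$ to collapse onto a first order one on each vector.

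The main obstacle is (2) $\Rightarrow$ (1). Fixing a unit $X_0\in\g{v}$, the map $T\mapsto J_T X_0$ is an isometry onto $\g{z}_{X_0}:=\{J_T X_0\}$, and for orthogonal $T_1,T_2$ the $J^2$-condition defines an anticommutative product $T_1\times T_2\in\g{z}$ by $J_{T_1}J_{T_2}X_0 = J_{T_1\times T_2}X_0$; combined with the inner product this makes $\R\oplus\g{z}$ into a composition algebra. By Hurwitz's theorem it is one of $\R,\C,\mathbb{H},\mathbb{O}$, so $k\in\{0,1,3,7\}$ and the module structure of $\g{v}$ is determined; reconstructing the solvable extension $\g{a}\oplus\g{v}\oplus\g{z}$ with the grading derivation then exhibits $\g{n}$ as the Iwasawa nilpotent part of the real, complex, quaternionic, or octonionic hyperbolic space, as in Theorem 4.2 of \cite{CDKR}. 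The delicate points are verifying that the $J^2$-condition yields a genuine (two-sided) composition algebra rather than a weaker structure, and handling the octonionic case, where non-associativity forbids modules of rank greater than one, so that only the octonionic hyperbolic plane occurs.
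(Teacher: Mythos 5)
The paper gives no proof of this theorem at all: it is recalled as a known result, with an explicit pointer to Theorems 4.2 and 5.1 of \cite{CDKR}, so there is no internal argument to compare yours with. Your outline is precisely a roadmap of those two cited proofs --- (2)$\Leftrightarrow$(3) is Theorem 5.1 of \cite{CDKR}, and (1)$\Leftrightarrow$(2) is their Theorem 4.2 via composition algebras and Hurwitz's theorem --- and at every genuinely hard step (the differential computation for $\sigma$; the verification that the $J^2$-condition yields a two-sided composition algebra; the octonionic rank restriction) you, like the paper, defer to \cite{CDKR} rather than carry the argument out. So the proposal is a faithful sketch of the standard proof, not a self-contained alternative; the preliminary computations you do supply (the formula $(-|X|^2I+4J_T)^{-1}=(|X|^4+16|T|^2)^{-1}(-|X|^2I-4J_T)$ and the description of the horizontal space at $(X,T)$ as $\{(V,\tfrac12[X,V])\}$) are correct. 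One caveat: your claim that (1)$\Rightarrow$(2) follows from ``the Jacobi identity together with $\g{g}_{3\alpha}=0$'' is optimistic; that reduction only rewrites $J_{T_1}J_{T_2}X$ as the action on $X$ of an element of the centralizer $\g{m}\oplus\g{a}$, and identifying that action with some $J_TX$ is where \cite{CDKR} do real work.

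One point in your proposal deserves to be recorded in the paper itself: your restriction to $T_1\perp T_2$ is not merely ``the operative instance'' but a necessary correction of the statement. For arbitrary $T_1,T_2$ one has $\bra J_{T_1}J_{T_2}X,X\ket=-\bra T_1,T_2\ket|X|^2$, whereas $\bra J_TX,X\ket=0$ for every $T$; hence if $\bra T_1,T_2\ket\ne0$ and $X\ne0$ the vector $J_{T_1}J_{T_2}X$ can never equal any $J_TX$, and condition (2) as literally written is unsatisfiable whenever $\g{z}\ne0$. The correct condition --- the $J^2$-condition of \cite{CDKR} --- requires $\bra T_1,T_2\ket=0$, exactly as you use it, and the statement of the theorem should be amended accordingly.
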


A group verifying one of the equivalent statements of Theorem \ref{TeoremaIwasawa} is called a group of Iwasawa type.

\section{Contact structures of Heisenberg type}\label{SezioneDefinizione}
Let $M$ be a manifold, $\g{H}$ a $2n$-dimensional subbundle of $TM$ with a metric $g$, and $\ci{V}$ a $k$-dimensional vector space of differential forms whose restriction to $\g{H}$ is null and such that the induced forms on $TM/\g{H}$ constitute the whole dual thereof (which implies that $\dim(M)=2n+k$), endowed with a scalar product $\bra\cdot,\cdot\ket$.

We say that $(M,\g{H},g,\ci{V})$ is a contact manifold of Heisenberg type if:
\begin{itemize}
 \item for every $\theta\in\ci{V}$ the operator on $\g{H}$ defined by $g(X,J_{\theta}Y)=d\theta(X,Y)$ is such that the map $\ci{V}\to\ci{L}(\g{H}_x)$ given by $\theta\mapsto J_{\theta}$ is a Clifford algebra structure for every $x\in M$;
 \item calling $T_{\theta}$ the vector field defined by $i_{T_{\theta}}d\theta=0$ and $\psi(T_{\theta})=\bra\psi,\theta\ket$ for every $\psi\in\ci{V}$, it holds that the map $\theta\mapsto T_{\theta}$ is linear (and its image is called $\g{T}$, Reeb subspace).
\end{itemize}

We extend $g$ to the whole $TM$ imposing that $\g{H}$ and $\g{Z}$ be orthogonal and that the extension restricted to $\g{T}$ coincide with the product dual to $\bra\cdot,\cdot\ket$.

\begin{osservazione}
 An equivalent definition is defining a form $\bm{\theta}$ with values on a vector space $V$ with a scalar product, such that $\ker\bm{\theta}=\g{H}$ and the family of forms $\{\psi\circ\bm{\theta}\;|\;\psi\in V^*\}$ satisfies the properties of the original definition.
\end{osservazione}

\begin{osservazione}\label{OssRiemCont}
 If $\dim\ci{V}=1$ then a contact structure of Heisenberg type is equivalent to a Riemannian contact structure.
 %\rosso{CITAZIONE}
\end{osservazione}

\begin{osservazione}\label{Gruppi}
 If $G$ is a Lie group of Heisenberg type then thanks to the scalar product we can associate to every $T\in\g{z}$ a form which is zero on $\g{v}$, and
 $$d\theta_T(X,Y) = -\theta_T([X,Y]) = -\bra T,[X,Y]\ket =  -g(J_TX,Y) = g(X,J_TY)$$
 by definition, therefore it gives $G$ a structure of contact manifold of Heisenberg type.
\end{osservazione}

Given a contact manifold of Heisenberg type $M$, we call $G(M)$ the group of Heisenberg type whose Lie algebra is the one corresponding to the Clifford algebra $\ci{V}\to\ci{L}(\g{H}_x)$ in the definition (whose isomorphism type does not depend on $x$), and say that $M$ is locally modelled on $G(M)$.

\subsection{Conformal change}\label{SezioneCambioConforme}
Given $f$ smooth and nowhere zero, let $\widetilde{\ci{V}}=\{f\theta|\theta\in\ci{V}\}$ and $\widetilde{g}=fg$. Then
$$\widetilde{g}(X,J_{\theta}Y)= fg(X,J_{\theta}Y) = fd\theta(X,Y) = d(f\theta)(X,Y) = d\widetilde{\theta}(X,Y)$$
and therefore $J_{f\theta}=J_{\theta}$.
Therefore they form at every point a Clifford algebra structure.

Furthermore $\widetilde{T}_{f\theta}=\frac{1}{f}T_{\theta}+X_{\theta,f}$ with
$$0=i_{\frac{1}{f}T_{\theta}+X_{\theta,f}}(fd\theta +df\wedge\theta)|_{\g{H}} = fi_{X_{\theta,f}}d\theta|_{\g{H}} - \frac{|\theta|^2}{f}df|_{\g{H}} $$
therefore $X_{\theta,f}$ is determined uniquely by $i_{X_{\theta,f}}d\theta|_{\g{H}} = \frac{|\theta|^2}{f^2}df|_{\g{H}}$.

Since
$$\theta_{\psi}(T_{\theta_1}+T_{\theta_2}) = \bra\psi,\theta_1\ket + \bra\psi,\theta_2\ket = \bra\psi,\theta_1+\theta_2\ket$$
and obviously $T_{c\theta}= cT_{\theta}$, the linearity of $\theta\mapsto T_{\theta}$ is equivalent to
$$0 = i_{T_{\theta_1}+T_{\theta_2}}d(\theta_1+\theta_2)|_{\g{H}} = \left.\left(i_{T_{\theta_1}}d\theta_2 + i_{T_{\theta_2}}d\theta_1\right)\right|_{\g{H}}$$
and therefore, after the conformal change, calling $X=X_{\theta_1,f}$ and $Y=X_{\theta_2,f}$,
$$\left.\left(i_{\widetilde{T}_{\theta_1}}d\widetilde{\theta}_2 + i_{\widetilde{T}_{\theta_2}}d\widetilde{\theta}_1\right)\right|_{\g{H}}=$$
$$= \left.\left(i_{\frac{1}{f}T_{\theta_1}+X}(fd\theta_2 +df\wedge\theta_2) + i_{\frac{1}{f}T_{\theta_2}+Y}(fd\theta_1 +df\wedge\theta_1)\right)\right|_{\g{H}}=$$
$$= \left.\left(i_{T_{\theta_1}}d\theta_2 + fi_Xd\theta_2  + \frac{1}{f}T_{\theta_1}(f)\theta_2 - \frac{1}{f}\bra\theta_1,\theta_2\ket df + X(f)\theta_2 \right)\right|_{\g{H}}+$$
$$+\left.\left(i_{T_{\theta_2}}d\theta_1 + fi_Yd\theta_1  + \frac{1}{f}T_{\theta_2}(f)\theta_1 - \frac{1}{f}\bra\theta_1,\theta_2\ket df + Y(f)\theta_1\right)\right|_{\g{H}}=$$
$$= \left(i_{T_{\theta_1}}d\theta_2 +i_{T_{\theta_2}}d\theta_1 + fi_Xd\theta_2 + fi_Yd\theta_1 - \frac{2}{f}\bra\theta_1,\theta_2\ket df \right)|_{\g{H}}.$$
For every $Z\in\g{H}$ it holds that
$$fi_Xd\theta_2(Z) = fd\theta_2(X,Z) = fg(X,J_{\theta_2}Z) = -\frac{1}{|\theta_1|^2}fg(X,J_{\theta_1}^2J_{\theta_2}Z) =$$
$$= -\frac{1}{|\theta_1|^2}fd\theta_1(X,J_{\theta_1}J_{\theta_2}Z) = -\frac{1}{|\theta_1|^2}fi_Xd\theta_1(J_{\theta_1}J_{\theta_2}Z) = -\frac{1}{f}df(J_{\theta_1}J_{\theta_2}Z)$$
and analogously $fi_Yd\theta_1(Z) = -\frac{1}{f}df(J_{\theta_2}J_{\theta_1}Z)$, therefore
$$fi_Xd\theta_2(Z)+fi_Yd\theta_1(Z) = -\frac{1}{f}df(J_{\theta_1}J_{\theta_2}Z) -\frac{1}{f}df(J_{\theta_2}J_{\theta_1}Z) =$$
$$= -\frac{1}{f}df((J_{\theta_1}J_{\theta_2} +J_{\theta_2}J_{\theta_1})Z) = 2\bra\theta_1,\theta_2\ket \frac{1}{f}df(Z)$$
and therefore we have proved that the new structure satisfies the definition of contact manifold of Heisenberg type.

\subsection{The Iwasawa sphere}\label{SezioneSfera}
Thanks to the Garofalo-Lanconelli solution $U$ defined in formula \eqref{SoluzioneGarofaloLanconelli} we can define a manifold analogous to the sphere $S^n$ with its standard metric in Riemannian geometry and to the sphere $S^{2n-1}\subset\C^n$ with its standard contact form in CR geometry.

In fact, thanks to the computations in \cite{CDKR}, given a group of Heisenberg type $G$ with its standard contact structure of Heisenberg type $\ci{V}$ defined in Remark \ref{Gruppi}, if we perform a conformal change by defining $\widetilde{\ci{V}}= \{U^{\frac{4}{Q-2}}\theta\;|\;\theta\in\ci{V}\}$, then $\widetilde{\ci{V}}$ is left invariant by the spherical inversion $\sigma$ defined in formula \ref{DefinizioneInversione}, and therefore we can define a contact structure of Heisenberg type on $S^{2n+2k}$ by glueing two copies of $G$ along $G\setminus\{0\}$ through $\sigma$. We call $S^{2n+2k}$ with this structure the Iwasawa sphere $\bm{S}_G$. For example $\bm{S}_{H^n}$ is the sphere $S^{2n-1}\subset\C^n$ with its standard CR structure and contact form.

Thanks to Theorem \ref{TeoremaFormulaConforme}, we will show that the Iwasawa sphere has constant scalar curvature.

\section{A natural connection}\label{SezioneConnessione}
We recall that given a vector bundle $E$ over the manifold $M$ and a subbundle $V$ of $TM$, a partial connection on $E$ along $V$ is a linear map $\nabla:\Gamma(E)\to\Gamma(V*\times E)$ such that $\nabla s = df\oplus s + f\nabla s$. The restriction of an affine connection to $V$ is a partial connection, and viceversa if $TM=V\oplus W$ and $\nabla^V$ and $\nabla^W$ are partial connections along $V$ and $W$ respectively, there exists a unique affine connection whose restrictions along $V$ and $W$ coincide with $\nabla^V$ and $\nabla^W$.
If $E=V$ then the torsion tensor $T\in\Gamma(V^*\times V^*\times TM)$,
$$T(X,Y) = \nabla_XY-\nabla_YX -[X,Y]$$
is defined.

The aim of this section is to define an affine connection on $\g{H}$. Thanks to the above remarks, we can study the question of a partial connection along $\g{H}$ and a partial connection along $\g{T}$ separately.

\subsection{Partial connection along $\g{H}$}

\begin{lemma}\label{EsistenzaConnessione}
 There exists a connection on $\g{H}$ such that the space $\{J_{\theta}|\theta\in\ci{V}\}$ and $g$ are parallel.
\end{lemma}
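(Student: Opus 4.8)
The plan is to construct the partial connection along $\g{H}$ in two steps: begin with an arbitrary $g$-compatible connection and then correct it by an $\End(\g{H})$-valued one-form, so that the Clifford subbundle becomes parallel without spoiling metricity. I fix a $g$-orthonormal basis $\theta_1,\dots,\theta_k$ of $\ci{V}$ and write $J_i=J_{\theta_i}$, so that $J_iJ_j+J_jJ_i=-2\delta_{ij}I$ holds pointwise and $\ci{J}:=\{J_\theta\,|\,\theta\in\ci{V}\}=\operatorname{span}(J_1,\dots,J_k)$ is a rank-$k$ subbundle of $\End(\g{H})$. Since each $J_i$ is $g$-skew, the assertion to prove is the existence of a $g$-compatible connection $\nabla$ on $\g{H}$ with $\nabla_XJ_i\in\ci{J}$ for every $i$ and every $X$, this last condition being precisely the parallelism of the subbundle $\ci{J}$.

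First I would fix any connection $\bar\nabla$ on $\g{H}$ compatible with $g$; one exists, for instance by restricting the Levi-Civita connection of the extended metric on $TM$ to sections of $\g{H}$ and projecting the result back onto $\g{H}$. Because $g$ is $\bar\nabla$-parallel and each $J_i$ is $g$-skew, every $\bar\nabla_XJ_i$ is again $g$-skew; and differentiating the Clifford relations yields the identity $\{\bar\nabla_XJ_i,J_j\}+\{\bar\nabla_XJ_j,J_i\}=0$ (in particular $\bar\nabla_XJ_i$ anticommutes with $J_i$). It is exactly these algebraic constraints that will make room to absorb $\bar\nabla J$ into $\ci{J}$.

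I then set $\nabla=\bar\nabla+A$ with $A$ a one-form valued in the $g$-skew endomorphisms $\g{so}(\g{H})$, which keeps $\nabla$ metric, so that $\nabla_XJ_i=\bar\nabla_XJ_i+[A_X,J_i]$. The problem now collapses to a pointwise question of linear algebra: given the skew endomorphisms $B_i:=\bar\nabla_XJ_i$, subject to the relation above, find a skew $A_X$ with $B_i+[A_X,J_i]\in\ci{J}$ for every $i$. This solvability is the crux of the argument, and I would settle it with the representation theory of Clifford modules. Splitting $\g{H}_x\cong\mathbb{S}\otimes W$ with $J_\theta=\gamma(\theta)\otimes I_W$ for the standard Clifford action $\gamma$ on the spinor space $\mathbb{S}$, one decomposes $\g{so}(\g{H})$ into the image $\operatorname{span}\{J_iJ_j\}$ of $\g{spin}(k)$, the commutant of $\ci{J}$, and a complementary piece, and checks that the anticommutator relation confines the $B_i$ to a subspace on which $A\mapsto([A,J_i])_i$ is onto modulo $\ci{J}$; hence the required $A_X$ exists, and in the model case $k=1$ one can even solve $B_1+[A_X,J_1]=0$ exactly. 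Taking the minimal-norm solution makes $A_X$ depend linearly, hence smoothly, on $X$, so $A$ is a genuine one-form.

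A cleaner, more conceptual route gives the same result: the orthonormal frames of $\g{H}$ in which $g$ and $J_1,\dots,J_k$ take their standard matrix form constitute a reduction of the frame bundle of $\g{H}$ to the closed subgroup $G_0\subset O(2n)$ stabilising the standard Clifford data, a bona fide principal $G_0$-bundle because the isomorphism type of $\ci{V}\to\ci{L}(\g{H}_x)$ is independent of $x$; any principal connection on it — one exists by a partition-of-unity argument — induces a connection on $\g{H}$ for which every $G_0$-invariant object is parallel, in particular $g$ and the subbundle $\ci{J}$. In either route the only genuine content is one and the same linear-algebraic fact: that $G_0$ acts transitively on standard frames, equivalently that the commutator equation $B_i+[A_X,J_i]\in\ci{J}$ is always solvable. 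That is the step I expect to be the main obstacle.
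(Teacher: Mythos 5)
Your second, ``conceptual'' route is essentially the paper's own proof: the paper likewise reduces the structure group of the frame bundle of $\g{H}$ and invokes the general existence of principal connections. The difference is the choice of group. The paper reduces to $\rho(\operatorname{Pin}(\ci{V}))$, the image of the Pin group acting through the Clifford module structure; with that choice parallel transport only preserves the Pin orbit, so the paper must finish with an algebraic computation (differentiating $J_i^2=-I$) to show that $\nabla_XJ_i$ lies in $\operatorname{span}(J_j)$ rather than merely in the larger space $J_i\cdot\operatorname{span}(J_jJ_k)$. Your reduction to the full stabilizer $G_0\subset O(2n)$ of the standard data $(g,J_1,\dots,J_k)$ (the commutant of the Clifford action inside $O(2n)$) is cleaner: it makes each $J_i$ individually parallel, which is stronger than the lemma requires and makes the closing computation unnecessary, and it handles metricity explicitly, which the paper leaves implicit. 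Two caveats. First, your first route is not a proof as written: you yourself flag the solvability of $B_i+[A_X,J_i]\in\ci{J}$ as the unproven crux; note that it is in fact a corollary of the second route, since the difference $a_X$ between an arbitrary metric connection and the adapted one is skew and gives $B_i=[a_X,J_i]+(\text{element of }\ci{J})$, so the frame-bundle argument should be presented as the proof, not as an alternative. Second, you misplace ``the only genuine content'': transitivity of $G_0$ on standard frames is automatic from the definition of $G_0$, and what actually needs justification is that standard frames exist at every point and can be chosen smoothly on a neighbourhood --- i.e.\ that $(\g{H}_x,g_x,J_i)$ is isometrically isomorphic to a fixed model, which uses the local constancy of the Clifford-module isomorphism type together with a polar-decomposition argument to turn a module isomorphism into an isometric one. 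The paper is equally brisk on exactly this point, so this is not a gap relative to the paper's own standard of rigor, but it is where the real work sits.
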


\begin{proof}
 Since $\ci{V}\to\ci{L}(\g{H})$ is a Clifford algebra structure, it is a module over $\operatorname{Cliff}(\ci{V})$, and therefore by restriction a representation $\rho:\operatorname{Pin}(\ci{V})\to GL(\g{H})$ is induced.

 Let $\g{F}(\g{H})$ the proncipal bundle over $GL(\R^{2n})$ of the frames of $\g{H}$.
 Let $\widetilde{\rho}:\operatorname{Pin}(\R^k)\mapsto\operatorname{End}(\R^{2n})$ be a model representation isomorphic to $\rho$. For $\phi\in\operatorname{Pin}(\R^k)$ let $a_{ij}(g)$ be the coefficients of $\rho(g)$.
 Let $(X_1,\ldots,X_{2n})$ a smooth local frame for $\g{H}$ such that $\rho(\phi(g))X_i = a_{ij}(g)$.
 Then
 $$P=\{ (\sum_ia_{i1}(g)(X_1),\ldots,\sum_ia_{i,2n}(g)(X_{2n})) | g\in\operatorname{Pin}(\ci{V}_x)\}$$
 with the natural structure of principal $\operatorname{Pin}(\R^k)$-bundle is a reduction of the structure group of $\g{F}(\g{H})$ from $GL(\R^{2n})$ to $\rho(\operatorname{Pin}(\R^k))$.
 
 By the general theory of connections there exists a principal connection on $P$. Let $\nabla$ be the associated connection on $\g{H}$ through $\rho$. Since $\rho(\operatorname{Pin}(\ci{V}))$ is parallel, $\nabla_XJ_{\theta}$ must belong to the tangent space of $\rho(\operatorname{Pin}(\ci{V}))$ in $J_{\theta}$, that is $d\rho(T_{\theta}\operatorname{Pin}(\ci{V}))=J_\theta(\operatorname{span}(J_iJ_j))$ (where $J_i=J_{\theta^i}$ for some orthonormal basis $\theta^1,\ldots,\theta^k$ of $\ci{V}$).
 
 Therefore
 $$\nabla_XJ_i = J_i\sum_{j<k}a_{jk}J_jJ_k = \sum_{j\ne i}a_jJ_j + J_i\sum_{j<k,j,k\ne i}a_{jk}J_jJ_k $$
 It holds that
 $$0= -\nabla_X(I) = J_i\nabla_XJ_i + (\nabla_XJ_i)J_i =$$
 $$= J_i\sum_{j\ne i}a_jJ_j + J_iJ_i\sum_{j<k,j,k\ne i}a_{jk}J_jJ_k + \sum_{j\ne i}a_jJ_jJ_i + J_i\sum_{j<k,j,k\ne i}a_{jk}J_jJ_kJ_i =$$
 $$= -2\sum_{j<k,j,k\ne i}a_{jk}J_jJ_k $$
 So $\sum_{j<k,j,k\ne i}a_{jk}J_jJ_k=0$ and therefore $\nabla_XJ_i = \sum_{j\ne i}a_jJ_j$.
\end{proof}

\begin{lemma}\label{LemmaConnessione}
 Given a partial connection along $\g{H}$ satisfying the thesis of Lemma \ref{EsistenzaConnessione}, another connection $\widetilde{\nabla}_X=\nabla_X+a_X$ satisfies it if and only if
 $$a_X = \sum_{i<j}\alfa_{ij}(X)J_{\theta^i}J_{\theta^j} +b_X$$
 (where $\theta^1,\ldots,\theta^k$ is an orthonormal basis of $\ci{V}$)
 for some differential forms $\alfa_{ij}$ and for $b_X$ antisymmetric and satisfying $b_XJ_{\theta}=J_{\theta}b_X$ for every $X$ and $\theta$.
\end{lemma}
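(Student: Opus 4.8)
The plan is to characterize, at each point, the set of endomorphisms $a_X$ that can be added to $\nabla$ while preserving both conditions of Lemma \ref{EsistenzaConnessione}. Two connections differ by a $\g{H}^*$-valued endomorphism field $a$, so the question reduces to a pointwise linear-algebra problem: determine which $a_X \in \operatorname{End}(\g{H}_x)$ leave $g$ parallel and keep every $J_\theta$ parallel.

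First I would impose parallelism of $g$. Since $\nabla$ already makes $g$ parallel, $\widetilde\nabla g = 0$ forces $a_X$ to be antisymmetric with respect to $g$ for every $X$. So from now on $a_X \in \g{so}(\g{H}_x)$. Next I would impose that the space $\{J_\theta \mid \theta \in \ci{V}\}$ be parallel for $\widetilde\nabla$. For an endomorphism $A = J_\theta$, the covariant derivative transforms as $\widetilde\nabla_X A = \nabla_X A + [a_X, A]$; since $\nabla$ already keeps the space $\{J_\theta\}$ parallel, the extra condition is exactly that $[a_X, J_\theta]$ lie in the span $\{J_\psi \mid \psi \in \ci{V}\}$ for every $\theta$ and every $X$. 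Writing $J_i = J_{\theta^i}$ for an orthonormal basis, this means $[a_X, J_i] \in \operatorname{span}(J_1,\dots,J_k)$ for each $i$.

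The heart of the argument is therefore the purely algebraic claim: an antisymmetric $A \in \operatorname{End}(\g{H}_x)$ satisfies $[A, J_i] \in \operatorname{span}(J_1,\dots,J_k)$ for all $i$ if and only if $A = \sum_{i<j}\alfa_{ij}J_iJ_j + b$, where $b$ is antisymmetric and commutes with every $J_\theta$. To prove this I would decompose $\operatorname{End}(\g{H}_x)$ using the action of the Clifford algebra $\operatorname{Cliff}(\ci{V})$ generated by the $J_i$. The products $J_iJ_j$ ($i<j$) are themselves antisymmetric (using $J_i^* = -J_i$ and the Clifford relation), and a direct computation with the Clifford relation $J_iJ_j + J_jJ_i = -2\delta_{ij}I$ shows $[J_iJ_j, J_\ell]$ is a linear combination of the $J_m$; so every term of the asserted form is admissible, giving the "if" direction. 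For the converse, I would argue that the commutator map $A \mapsto ([A,J_1],\dots,[A,J_k])$, restricted to $\g{so}(\g{H}_x)$, has image landing in $\bigoplus \operatorname{span}(J_m)$ precisely on the subspace $\operatorname{span}(J_iJ_j) \oplus \{b : [b,J_\theta]=0\}$; the part commuting with all $J_\theta$ is exactly the kernel $b$, and the computation that $J_iJ_j$ already generates the full set of possible commutator values closes the identification.

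The main obstacle is this converse containment: showing that nothing outside $\operatorname{span}(J_iJ_j) \oplus \{b\}$ can have all its commutators $[A, J_i]$ inside $\operatorname{span}(J_m)$. The clean way is to use the structure of $\g{H}_x$ as a Clifford module and the grading of $\operatorname{End}(\g{H}_x)$ by the "number of Clifford factors": commutation with $J_i$ shifts this grading in a controlled way, and requiring the output to sit in degree one (the span of the $J_m$) constrains $A$ to its degree-two part $\operatorname{span}(J_iJ_j)$ together with whatever commutes with the whole Clifford action (the summand $b$). I would verify that $b$ being antisymmetric and commuting with all $J_\theta$ is consistent with the degree decomposition, so that these are exactly the two allowed pieces and no others survive.
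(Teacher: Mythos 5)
Your reduction of the lemma to a pointwise linear--algebra problem, the derivation of the antisymmetry of $a_X$ from $\widetilde{\nabla}g=0$, the identification of the remaining condition as $[a_X,J_{\theta}]\in\operatorname{span}\{J_{\psi}\,|\,\psi\in\ci{V}\}$, and the ``if'' direction are all correct and agree with the paper. The gap is in the converse, which is the only step with real content: the ``grading of $\operatorname{End}(\g{H}_x)$ by the number of Clifford factors'' on which your argument hinges does not exist in general. There are two concrete obstructions. First, $\operatorname{End}(\g{H}_x)$ is much larger than the image of $\operatorname{Cliff}(\ci{V})$ and need not be spanned by products of Clifford elements with elements of the commutant: already for $k=1$ (the Heisenberg group, a single $J$ with $J^2=-I$) the Clifford image $\operatorname{span}(I,J)$ lies \emph{inside} its own commutant $Z=\{A\,:\,AJ=JA\}$, so such products fill only $Z$, and the conjugate-linear endomorphisms ($AJ=-JA$), which make up half of $\operatorname{End}(\g{H}_x)$, carry no ``Clifford degree'' at all. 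Second, even on the Clifford image the degree is not well defined, because the module need not see the exterior-algebra grading: for the seven-dimensional quaternionic Heisenberg group ($k=3$, $\g{v}=\mathbf{H}$) the volume element $J_{\theta^1}J_{\theta^2}J_{\theta^3}$ acts as $\pm I$, hence $J_{\theta^1}J_{\theta^2}=\mp J_{\theta^3}$, and $\operatorname{span}(J_{\theta^i}J_{\theta^j})$ meets $\operatorname{span}(J_{\theta^m})$ nontrivially. On such examples --- which include the most important groups of Heisenberg type --- the sentence ``requiring the output to sit in degree one constrains $A$ to its degree-two part plus the commutant'' has no meaning, so your converse is unproved.

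The repair needs no decomposition of $\operatorname{End}(\g{H}_x)$ at all, and it is what the paper does. Since $\theta\mapsto J_{\theta}$ is injective (because $J_{\theta}^2=-|\theta|^2I$), you may define coefficients by $[a_X,J_{\theta^i}]=\sum_j c_{ij}(X)J_{\theta^j}$. Because $\operatorname{ad}_{a_X}$ is a derivation, applying it to the Clifford relations $J_{\theta^i}J_{\theta^j}+J_{\theta^j}J_{\theta^i}=-2\delta_{ij}I$ forces $c_{ii}=0$ and $c_{ij}+c_{ji}=0$; this constraint is essential (for instance $[a_X,J_{\theta^i}]=J_{\theta^i}$ is impossible, since $\operatorname{ad}_{a_X}$ must kill $J_{\theta^i}^2=-I$) and your sketch never produces it --- in the paper it is obtained by ``differentiating'' $J_{\theta}^2$ and its polarization along the two connections. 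Then the explicit bivector $B_X=\tfrac{1}{2}\sum_{i<j}c_{ij}(X)J_{\theta^i}J_{\theta^j}$ is antisymmetric and satisfies $[B_X,J_{\theta^i}]=\sum_j c_{ij}(X)J_{\theta^j}=[a_X,J_{\theta^i}]$, so $b_X:=a_X-B_X$ is antisymmetric and commutes with every $J_{\theta}$, which is exactly the claimed normal form. Any rigorous version of your representation-theoretic route would in effect have to reproduce this computation, so you should simply argue explicitly.
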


\begin{proof}
 It is a standard fact that $\widetilde{\nabla}g=0$ if and only if $a_X$ is antisymmetric.
 In order to leave $\{J_{\theta}|\theta\in\ci{V}\}$ parallel,
 $$((\widetilde{\nabla}_X-\nabla_X)J_{\theta})(Y) =$$
 $$= (\widetilde{\nabla}_X-\nabla_X)(J_{\theta}Y)-J_{\theta}((\widetilde{\nabla}_X-\nabla_X)Y)= a_X(J_{\theta}Y) - J_{\theta}(a_X(Y))$$
 must be equal to $J_{\psi}Y$ for some $\psi\in\ci{V}$. Differentiating $J_{\theta}^2=1$ we get that $(\nabla_XJ_{\theta})J_{\theta}+J_{\theta}\nabla_XJ_{\theta}=0$ and $(\widetilde{\nabla}_XJ_{\theta})J_{\theta}+J_{\theta}\widetilde{\nabla}_XJ_{\theta}=0$, therefore $J_{\psi}$ must anticommute with $J_{\theta}$, which implies that $\bra\psi,\theta\ket=0$.
 Calling $\psi=A(X)\psi$, in a basis
 %\rosso{SCRIVERE MEGLIO}
 $A(X)\theta^i=\sum_{j\ne i}\alfa_{ij}(X)\theta^j$.
 Differentiating the Clifford relation as before we get
 $$(\nabla_XJ_{\theta^i})J_{\theta^j}+ J_{\theta^i}(\nabla_XJ_{\theta^j})+ (\nabla_XJ_{\theta^j})J_{\theta^i} + J_{\theta^j}(\nabla_XJ_{\theta^i}) =0$$
 and
 $$(\widetilde{\nabla}_XJ_{\theta^i})J_{\theta^j}+ J_{\theta^i}(\widetilde{\nabla}_XJ_{\theta^j})+ (\widetilde{\nabla}_XJ_{\theta^j})J_{\theta^i} + J_{\theta^j}(\widetilde{\nabla}_XJ_{\theta^i}) =0,$$
 and subtracting we get
 $$J_{A(X){\theta^i}}J_{\theta^j} + J_{\theta^i}J_{A(X){\theta^j}} + J_{A(X){\theta^j}}J_{\theta^i} + J_{\theta^j}J_{A(X){\theta^i}} =0,$$
 and therefore
 $$0=\sum_{k\ne i}\alfa_{ik}(X)J_{\theta^k}J_{\theta^j} + \sum_{k\ne j}\alfa_{jk}J_{\theta^i}J_{\theta^k} + \sum_{k\ne j}\alfa_{jk}J_{\theta^k}J_{\theta^i} + \sum_{k\ne i}\alfa_{ik}(X)J_{\theta^j}J_{\theta^k} =$$
 $$\sum_{k\ne i}\alfa_{ik}(X)\left(J_{\theta^k}J_{\theta^j}+J_{\theta^j}J_{\theta^k}\right) + \sum_{k\ne j}\alfa_{jk}(X)\left(J_{\theta^i}J_{\theta^k}+J_{\theta^k}J_{\theta^i}\right)=$$
 $$=-2\alfa_{ij}(X) - 2\alfa_{ji}(X)$$
 which implies that $\alfa_{ij} + \alfa_{ji}=0$.
 
 The tensor $\sum_{i<j}\alfa_{ij}(X)J_{\theta^i}J_{\theta^j}$ satisfies
 $$\left(\sum_{i<j}\alfa_{ij}(X)J_{\theta^i}J_{\theta^j}\right)\circ J_{\theta^i} -J_{\theta^i}\circ\left(\sum_{i<j}\alfa_{ij}(X)J_{\theta^i}J_{\theta^j}\right) =$$
 $$= \sum_{j<k}\alfa_{jk}(X)\left(J_{\theta^j}J_{\theta^k}J_{\theta^i} - J_{\theta^i}J_{\theta^j}J_{\theta^k}\right) = $$
 $$= -2\sum_{j<i}\alfa_{ji}(X)J_{\theta^j} +2\sum_{j>i}\alfa_{ij}(X)J_{\theta^j} = 2\sum_{j\ne i}\alfa_{ij}(X)J_{\theta^j}$$
 and is antisymmetric, therefore $b_X:=a_X-\frac{1}{2}\sum_{i<j}\alfa_{ij}(X)J_{\theta^i}J_{\theta^j}$ is antisymmetric and satisfies $b_XJ_{\theta}=J_{\theta}b_X$ for every $\theta$.
\end{proof}

Let us define
\begin{equation}\label{DefinizioneQ}
 Q_{\theta}(X,Y) = J_{\theta}(\nabla_XJ_{\theta})Y -J_{\theta}(\nabla_YJ_{\theta})X + (\nabla_{J_{\theta}Y}J_{\theta})X - (\nabla_{J_{\theta}X}J_{\theta})Y
\end{equation}
which can easily be verified to be a tensor.

\begin{lemma}\label{LemmaTensoreQ}
 In the notation of Lemma \ref{LemmaConnessione} there exist a unique choice of forms $\alfa_{ij}$ such that $\widetilde{Q}_{\theta}=0$, while the addition of a tensor $b$ leaves the tensor $Q$ invariant.
\end{lemma}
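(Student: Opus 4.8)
The plan is to reduce both assertions to a single linear map in the unknown $1$-forms $\alfa_{ij}$, and then to prove that this map is injective (existence being essentially free).

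First I would record how $Q_\theta$ changes under a competitor as in Lemma~\ref{LemmaConnessione}, $\widetilde{\nabla}_X=\nabla_X+a_X$ with $a_X=\sum_{i<j}\alfa_{ij}(X)J_{\theta^i}J_{\theta^j}+b_X$. One has $\widetilde{\nabla}_XJ_\theta=\nabla_XJ_\theta+[a_X,J_\theta]$; since $b_X$ commutes with every $J_\theta$, only the quadratic part contributes, and by the computation already carried out in Lemma~\ref{LemmaConnessione} one gets $[a_X,J_\theta]=J_{A(X)\theta}$, where $A(X)\in\g{so}(\ci{V})$ is the skew-symmetric endomorphism with $A(X)\theta^i=2\sum_{j\ne i}\alfa_{ij}(X)\theta^j$, and where $\{\alfa_{ij}\}\mapsto A$ is a linear isomorphism onto the $\g{so}(\ci{V})$-valued $1$-forms on $\g{H}$. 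Substituting $\widetilde{\nabla}_\bullet J_\theta=\nabla_\bullet J_\theta+J_{A(\bullet)\theta}$ into \eqref{DefinizioneQ}, and using that each of the four terms of $Q_\theta$ contains exactly one factor $\nabla_\bullet J_\theta$, gives $\widetilde{Q}_\theta=Q_\theta+L(A)_\theta$ with
$$L(A)_\theta(X,Y)=J_\theta J_{A(X)\theta}Y-J_\theta J_{A(Y)\theta}X+J_{A(J_\theta Y)\theta}X-J_{A(J_\theta X)\theta}Y,$$
an expression linear in $A$.

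The second assertion is then immediate: $b_X$ does not appear in $\widetilde{\nabla}_\bullet J_\theta$, and $Q_\theta$ is built only from the operators $J_\theta$ and the derivatives $\nabla_\bullet J_\theta$; hence the addition of $b$ leaves $Q$ unchanged. For the first assertion I must solve $L(A)=-Q$ for $A$, uniquely. Existence is free: by the parallelism of $\{J_\theta\}$ from Lemma~\ref{EsistenzaConnessione} the starting connection satisfies $\nabla_XJ_\theta=J_{B(X)\theta}$ for a $\g{so}(\ci{V})$-valued $1$-form $B$, so $Q_\theta=L(B)_\theta$ and therefore $-Q=L(-B)$ already lies in the image of $L$; the solution $A=-B$ is precisely the one making $\widetilde{\nabla}J_\theta\equiv 0$. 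Thus the whole content of the statement is the injectivity of $L$, i.e. the implication $L(A)=0\Rightarrow A=0$, which yields uniqueness.

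Proving this injectivity is the main obstacle. The obvious contractions are useless, since $L(A)_\theta(X,Y)$ vanishes identically for $Y=X$ and for $Y=J_\theta X$ (note also the structural antisymmetry $Q_\theta(X,Y)=-Q_\theta(Y,X)$), and its metric traces over $\g{H}$ cancel, so a sharper projection is needed. I would fix $\theta=\theta^i$, so that $J_{A(X)\theta^i}=\sum_{m\ne i}2\alfa_{im}(X)J_{\theta^m}$ and $L(A)_{\theta^i}$ becomes an explicit sum over $m\ne i$ of terms $\alfa_{im}$ evaluated at $X,Y,J_{\theta^i}X,J_{\theta^i}Y$ times $J_{\theta^i}J_{\theta^m}$ or $J_{\theta^m}$. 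Assuming $L(A)_{\theta^i}=0$, I would recover a single coefficient $\alfa_{ij}(X)$ by forming the trilinear $g\bigl(L(A)_{\theta^i}(\cdot,Y),\cdot\bigr)$, pairing against $J_{\theta^j}$ and contracting over an orthonormal frame of $\g{H}$, collapsing the cross terms by the Clifford relation $J_{\theta^a}J_{\theta^b}+J_{\theta^b}J_{\theta^a}=-2\bra\theta^a,\theta^b\ket I$ and the trace identity $\operatorname{tr}(J_{\theta^a}J_{\theta^b})=-2n\bra\theta^a,\theta^b\ket$. The delicate step, which I expect to require genuine care, is to verify that in this contraction the coefficient multiplying $\alfa_{ij}(X)$ is nonzero, so that $\alfa_{ij}(X)=0$ is forced; this is essentially the only place where the Clifford trace identities and the careful bookkeeping of which substitutions (replacing $Y$, $J_\theta Y$, or a frame vector by the dual vector $\alfa_{ij}^{\sharp}$) give surviving contributions must be combined. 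Running the same contraction with $-Q$ in place of $L(A)$ then also produces the explicit formula for the $\alfa_{ij}$ realizing $\widetilde{Q}_\theta=0$.
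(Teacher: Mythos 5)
The first half of your proposal is correct, and it is in substance the paper's own argument: your identity $\widetilde{Q}_\theta=Q_\theta+L(A)_\theta$ is the paper's formula \eqref{DifferenzaQ}, the invariance under $b$ holds because $b_X$ commutes with every $J_\theta$, and your existence argument $A=-B$ is (up to sign conventions) the paper's choice $\alfa_{ki}=\tfrac{1}{2}\beta_{ki}$, where $\nabla_XJ_{\theta^k}=\sum_{i}\beta_{ki}(X)J_{\theta^i}$; as you observe, this is precisely the choice making the $J$'s parallel along $\g{H}$. The genuine gap is the uniqueness half, which is the whole content of the lemma: you reduce it to the injectivity of $L$ and then leave the decisive verification open, and that verification cannot be completed along the route you describe.

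Carrying out your contraction (with your normalization $J_{A(X)\theta^i}=2\sum_{m\ne i}\alfa_{im}(X)J_{\theta^m}$ and an orthonormal frame $E_1,\dots,E_{2n}$ of $\g{H}$) gives, for $j\ne i$,
$$\sum_{a=1}^{2n}g\bigl(L(A)_{\theta^i}(E_a,Y),J_{\theta^j}E_a\bigr)=(4n-4)\,\alfa_{ij}(J_{\theta^i}Y)+\sum_{m\ne i,j}\Bigl(4\,\alfa_{im}\bigl(J_{\theta^i}J_{\theta^j}J_{\theta^m}Y\bigr)+2\operatorname{tr}\bigl(J_{\theta^j}J_{\theta^i}J_{\theta^m}\bigr)\,\alfa_{im}(Y)\Bigr),$$
so the coefficient you worry about is the nonzero number $4n-4$, but that is not where the difficulty lies: for $k\ge 3$ the cross terms survive, because triple products of distinct $J$'s are neither zero nor, in general, traceless, and the contraction therefore yields only linear relations among the $\alfa_{im}$, never the vanishing of any single one. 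Worse, no refinement can close the gap, because the injectivity of $L$ actually fails in a case central to the paper. Take $k=3$ with the Clifford module isotypic, $J_{\theta^1}J_{\theta^2}J_{\theta^3}=\epsilon I$ with $\epsilon=\pm1$ (this holds for the quaternionic Heisenberg group, the nilpotent factor in the Iwasawa decomposition of $Sp(n,1)$). Then for an arbitrary $1$-form $\beta$ on $\g{H}$ the choice $\alfa_{12}=\beta$, $\alfa_{13}=\epsilon\,\beta\circ J_{\theta^1}$, $\alfa_{23}=\epsilon\,\beta\circ J_{\theta^2}$ lies in the kernel of $L$: using $J_{\theta^1}J_{\theta^2}=-\epsilon J_{\theta^3}$ and its cyclic analogues, the terms of $L(A)_\theta$ cancel in pairs for every $\theta\in\ci{V}$ (in particular your trace relations, such as $\alfa_{12}\circ J_{\theta^1}=\epsilon\,\alfa_{13}$, are satisfied without forcing $\alfa=0$). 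Thus two distinct choices of the $\alfa_{ij}$ produce $\widetilde{Q}_\theta\equiv 0$, and the uniqueness you are asked to prove is false in this case. For completeness: the same configuration defeats the paper's own proof, since it makes the sum \eqref{DifferenzaQ} vanish identically while its individual summands do not, so the isolation identity displayed just before \eqref{Somma4Vettori} (whose left-hand side is then an operation applied to the zero tensor, while its right-hand side is nonzero) cannot be correct; the uniqueness assertion of the lemma needs either an additional hypothesis excluding the isotypic $k=3$ case or a further normalization beyond $Q_\theta=0$.
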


\begin{proof}
 Since
 $$(\nabla_XJ_{\theta})(Y) = \nabla_X(J_{\theta}Y) - J_{\theta}(\nabla_XY)$$
 and analogously for $\widetilde{\nabla}$, we have
 \begin{equation}\label{DifferenzialeJ}
  (\nabla_X-\widetilde{\nabla}_X)J_{\theta} = a_XJ_{\theta} - J_{\theta}a_X,
 \end{equation}
 therefore
 $$\widetilde{Q}_{\theta^k}(X,Y) -Q_{\theta^k}(X,Y) =  $$
 $$ = J_{\theta^k}a_XJ_{\theta^k}Y +a_XY -J_{\theta^k}a_YJ_{\theta^k}X -a_YX +$$
 $$+ a_{J_{\theta^k}Y}J_{\theta^k}X -J_{\theta^k}a_{J_{\theta^k}Y}X - a_{J_{\theta^k}X}J_{\theta^k}Y + a_{J_{\theta^k}X}J_{\theta^k}Y=$$
 $$=\sum_{i<j}\alfa_{ij}(X)\left(J_{\theta^k}J_{\theta^i}J_{\theta^j}J_{\theta^k}Y + J_{\theta^i}J_{\theta^j}Y\right)+$$
 $$-\sum_{i<j}\alfa_{ij}(Y)\left(J_{\theta^k}J_{\theta^i}J_{\theta^j}J_{\theta^k}X + J_{\theta^i}J_{\theta^j}X\right)+$$
 $$+\sum_{i<j}\alfa_{ij}(J_{\theta^k}Y)\left(J_{\theta^i}J_{\theta^j}J_{\theta^k}X - J_{\theta^k}J_{\theta^i}J_{\theta^j}X \right) +$$
 $$-\sum_{i<j}\alfa_{ij}(J_{\theta^k}X)\left(J_{\theta^i}J_{\theta^j}J_{\theta^k}Y - J_{\theta^k}J_{\theta^i}J_{\theta^j}Y\right)=$$
 \begin{equation}\label{DifferenzaQ}
  =2\sum_{i\ne k}\left(\alfa_{ik}(X)J_{\theta^i}J_{\theta^k}Y-\alfa_{ik}(Y)J_{\theta^i}J_{\theta^k}X - \alfa_{ik}(J_{\theta^k}Y)J_{\theta^i}X + \alfa_{ik}(J_{\theta^k}X)J_{\theta^i}Y\right).
 \end{equation}
 Calling $\nabla_XJ_{\theta^k} = \sum_{i\ne k}\beta_{ki}(X)J_{\theta^i}$ we have that
 $$Q_{\theta^k}(X,Y) =$$
 $$= \sum_{i\ne k}\left( J_{\theta^k}\beta_{ki}(X)J_{\theta^i}Y -J_{\theta^k}\beta_{ki}(Y)J_{\theta^i}X + \beta_{ki}(J_{\theta^k}Y)J_{\theta^i}X - \beta_{ki}(J_{\theta^k}X)J_{\theta^i}Y \right).$$
 Therefore choosing $\alfa_{ki}=\frac{1}{2}\beta_{ki}$ we have that $\widetilde{Q}_{\theta^k}=0$.
 
 Now let us suppose that $Q_{\theta}=\widetilde{Q}=0$ for every $\theta$. Given $\ell\ne k$, thanks to formula \eqref{DifferenzaQ} it is easy to show that
 $$J_{\theta_{\ell}}\circ\left(Q_{\theta^k}-\widetilde{Q}_k\right) - \left(Q_{\theta^k}-\widetilde{Q}_k\right)\circ J_{\theta_{\ell}} = $$
 $$= 2\left(-\alfa_{\ell k}(X)J_{\theta^k}Y -\alfa_{\ell k}(Y)J_{\theta^k}X - \alfa_{\ell k}(J_{\theta^k}Y)X + \alfa_{\ell k}(J_{\theta^k}X)Y\right),$$
 and therefore
 $$\alfa_{ik}(X)J_{\theta^i}J_{\theta^k}Y-\alfa_{ik}(Y)J_{\theta^i}J_{\theta^k}X - \alfa_{ik}(J_{\theta^k}Y)J_{\theta^i}X + \alfa_{ik}(J_{\theta^k}X)J_{\theta^i}Y =0$$
 for every $i,k$. If $Y=J_{\theta^i}X$ then
 \begin{equation}\label{Somma4Vettori}
  \alfa_{ik}(X)J_{\theta^k}X-\alfa_{ik}(J_{\theta^i}X)J_{\theta^i}J_{\theta^k}X - \alfa_{ik}(J_{\theta^k}J_{\theta^i}X)J_{\theta^i}X - \alfa_{ik}(J_{\theta^k}X)X =0.
 \end{equation}
 Since $J_{\theta^i}$ and $J_{\theta^k}$ are antisymmetric $g(X,J_{\theta^i}X)=g(X,J_{\theta^k}X)=0$ and
 $$g(X,J_{\theta^i}J_{\theta^k}X) = -g(J_{\theta^i}X,J_{\theta^k}X) = g(J_{\theta^k}J_{\theta^i}X,X) = -g(J_{\theta^i}J_{\theta^k}X,X)$$
 and therefore $g(X,J_{\theta^i}J_{\theta^k}X)=g(J_{\theta^i}X,J_{\theta^k}X)=0$. Therefore formula \eqref{Somma4Vettori} is the sum of four linearly independent vectors, and from this we deduce that $\alfa_{ik}=0$ for every $i,k$ and therefore that $Q_{\theta}=\widetilde{Q}_{\theta}$ for every $\theta$.
\end{proof}

Now, in the notation of Lemma \ref{LemmaConnessione}, and supposing that $\alfa_{ij}=0$ thanks to Lemma \ref{LemmaTensoreQ}, the torsion of $\widetilde{\nabla}$ is
\begin{equation}\label{FormulaCambioTorsione}
 \widetilde{T}(X,Y) = T(X,Y) +(\widetilde{\nabla}_X-\nabla_X)Y -(\widetilde{\nabla}_Y-\nabla_Y)X= T(X,Y) +b_XY-b_YX.
\end{equation}
Let us call $\pi_{\g{H}}T$ the projection of the torsion on $\g{H}$ with respect to the direct sum with $\g{T}$.
Now, given the space of the tensors of type $(0,3)$ on $\g{H}$, $T^{(0,3)}(\g{H})$, let us consider the operator of antisymmetrization with respect to the first two entries.
Its restriction to the subspace of tensors antisymmetric with respect to the second and the third entry is injective thanks to the folowing lemma

\begin{lemma}
 If $T$ is a tensor such that $T(X,Y,Z)=-T(X,Z,Y)$ and\\ $T(X,Y,Z)=T(Y,X,Z)$ then $T=0$.
\end{lemma}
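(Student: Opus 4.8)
The plan is to exploit the two symmetry relations in alternation, producing a cycle of transpositions of the three arguments that returns to the original ordering while accumulating a net sign of $-1$. The key observation is that swapping the last two slots costs a factor $-1$ (by the antisymmetry hypothesis $T(X,Y,Z)=-T(X,Z,Y)$) while swapping the first two slots costs nothing (by the symmetry hypothesis $T(X,Y,Z)=T(Y,X,Z)$), so an odd number of the former kind of swap inside a closed cycle of permutations will be fatal.

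Concretely, I would start from $T(X,Y,Z)$ and apply the two relations in the order antisymmetry, symmetry, antisymmetry, symmetry, antisymmetry, symmetry. This yields the chain $T(X,Y,Z)=-T(X,Z,Y)=-T(Z,X,Y)=T(Z,Y,X)=T(Y,Z,X)=-T(Y,X,Z)=-T(X,Y,Z)$, where each equality uses one of the two hypotheses applied to the arguments currently occupying the indicated pair of slots. After these six transpositions the arguments have returned to their original order $(X,Y,Z)$, and since exactly three of the swaps were of the last two slots, the accumulated sign is $(-1)^3=-1$. Therefore $T(X,Y,Z)=-T(X,Y,Z)$ for all $X,Y,Z$, whence $T=0$.

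I do not expect any genuine obstacle here: this is the classical argument showing that a $(0,3)$-tensor which is symmetric in one pair of slots and antisymmetric in an overlapping pair must vanish. The only care required is the bookkeeping of signs and the verification that the cycle of permutations closes after exactly six steps (three of each type). This is precisely what the surrounding discussion needs, since a tensor lying in the kernel of the antisymmetrization in the first two entries is symmetric in those entries, and if it additionally lies in the subspace of tensors antisymmetric in the last two entries, it satisfies both hypotheses of the lemma; the conclusion $T=0$ then gives the injectivity claimed just above the statement.
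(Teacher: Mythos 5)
Your proof is correct and is essentially the paper's own argument: the paper's explicit computation is the same six-step alternating chain through all of $\mathfrak{S}_3$ (merely starting with the symmetry swap rather than the antisymmetry swap), accumulating the same net sign $-1$ and concluding $T=-T$. The paper also offers a slicker abstract phrasing --- transpositions in $\mathfrak{S}_3$ are conjugate, so if $(12)$ acts trivially then $(23)$ must as well, contradicting that it acts by $-1$ --- but your bookkeeping version coincides with the paper's explicit ``alternatively'' computation.
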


\begin{lemma}
 Considering the action of the permutation group $\g{S}_3$ on $\{T,-T\}$ given by $\sigma T(X_1,X_2,X_3)=T(X_{\sigma(1)},X_{\sigma(2)},X_{\sigma(3)})$, since transpositions of $\g{S}_3$ are conjugated, and by hypothesis $(12)$ is in the kernel, then also $(23)$ is, but then $T=0$.
 Alternatively
 $$T(X,Y,Z)=T(Y,X,Z)= -T(Y,Z,X) = -T(Z,Y,X) = T(Z,X,Y) =$$
 $$= T(X,Z,Y) = -T(X,Y,Z).$$
\end{lemma}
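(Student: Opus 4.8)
The plan is to prove $T=0$ by a direct symmetry argument that uses the two hypotheses alternately to permute the three arguments until the original ordering returns with the opposite sign. Concretely, starting from $T(X,Y,Z)$ I would first apply the symmetry in the first two slots, then the antisymmetry in the last two slots, and iterate: swapping $X\leftrightarrow Y$ costs no sign, while swapping $Y\leftrightarrow Z$ costs a factor $-1$. Running this alternation three times cycles through all six orderings of the triple and produces the chain
$$T(X,Y,Z)=T(Y,X,Z)=-T(Y,Z,X)=-T(Z,Y,X)=T(Z,X,Y)=T(X,Z,Y)=-T(X,Y,Z).$$

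From the two ends of this chain we read off $2\,T(X,Y,Z)=0$ for all $X,Y,Z$, and since $T$ is real valued this forces $T(X,Y,Z)=0$ for every triple of arguments. Multilinearity in each slot then upgrades this pointwise vanishing on arbitrary arguments to the vanishing of the tensor itself, so $T=0$, as claimed.

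There is essentially no obstacle in this lemma; the only point requiring care is that the chain of equalities actually closes, i.e.\ that the permutation carrying $(X,Y,Z)$ back to itself accumulates an odd number of sign changes. This is exactly the assertion that the transpositions $(12)$ and $(23)$, which by hypothesis act on $T$ with signs $+1$ and $-1$ respectively, cannot be so assigned if $T\ne0$: recording the sign by which $\sigma\in\g{S}_3$ acts on $T$ defines a homomorphism into the abelian group $\{\pm1\}$, which must therefore be constant on the conjugacy class of transpositions in $\g{S}_3$. Since $(12)$ and $(23)$ are conjugate, the values $+1$ and $-1$ are incompatible unless $T=0$. I would nevertheless write down the explicit chain above, as it is the quickest and most transparent route to the conclusion.
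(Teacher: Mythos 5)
Your proposal is correct and coincides with the paper's own proof: the explicit six-step chain you write is exactly the paper's ``alternatively'' computation, and your remark about the sign homomorphism into $\{\pm1\}$ being constant on the conjugacy class of transpositions is precisely the paper's first argument. Nothing is missing.
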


Now considering the subspace of $T^{(0,3)}(\g{H})$ such that they commute with $J_{\theta}$ for every $\theta$ with respect to the second and the third variable, let $\Sigma(\g{H})$ be the image of the antisymmetrization operator with respect to the first two entries.
Then, by formula \eqref{FormulaCambioTorsione} and the above reasoning, there exist a unique $b$ such that the torsion $\pi_{\g{H}}\widetilde{T}$ is orthogonal to $\Sigma(\g{H})$.
Therefore we get the following.

\begin{proposizione}\label{PropConnessioneH}
 There exists a unique partial connection on $\g{H}$ along $\g{H}$ such that $\{J_{\theta}|\theta\in\ci{V}\}$ and $g$ are parallel, $Q_{\theta}=0$ for every $\theta$ (where $Q_{\theta}$ is defined in formula \eqref{DefinizioneQ}) and such that $\pi_{\g{H}}T$ is orthogonal to $\Sigma(\g{H})$.
\end{proposizione}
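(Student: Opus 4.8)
The plan is to read off existence and uniqueness by assembling the three preceding lemmas and then pinning down the one remaining degree of freedom with the torsion condition, so that the proof becomes essentially a bookkeeping of which connections survive each requirement.

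First I would fix a reference connection. By Lemma \ref{EsistenzaConnessione} there is a partial connection along $\g{H}$ for which $g$ and $\{J_\theta\}$ are parallel; by the existence clause of Lemma \ref{LemmaTensoreQ} I may add a term $\sum_{i<j}\alfa_{ij}J_{\theta^i}J_{\theta^j}$ to make $Q_\theta$ vanish identically, and I call the resulting connection $\nabla$. Lemma \ref{LemmaConnessione} then describes every competitor: a connection $\nabla+a$ still has $g$ and $\{J_\theta\}$ parallel iff $a_X=\sum_{i<j}\alfa_{ij}(X)J_{\theta^i}J_{\theta^j}+b_X$ with $b$ antisymmetric and commuting with every $J_\theta$. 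Imposing in addition $Q_\theta\equiv 0$ forces $\alfa_{ij}=0$ by the uniqueness clause of Lemma \ref{LemmaTensoreQ} (since $\nabla$ already realizes $Q_\theta=0$), while the same lemma guarantees that the remaining term $b$ leaves $Q_\theta$ untouched. Hence the connections satisfying all the parallelism and $Q_\theta=0$ requirements are exactly $\widetilde\nabla=\nabla+b$ with $b$ ranging over the admissible tensors, and everything reduces to showing that the torsion condition selects a single such $b$.

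Next I would analyze the torsion. By \eqref{FormulaCambioTorsione} the passage from $\nabla$ to $\nabla+b$ changes the torsion by $b_XY-b_YX$, and since each $b_X$ preserves $\g{H}$ this is also the change of $\pi_{\g{H}}T$; lowering the last index with $g$ it reads $b(X,Y,Z)-b(Y,X,Z)$, i.e. twice the antisymmetrization of $b$ in its first two slots. Because an admissible $b$ is antisymmetric in the last two entries and commutes with every $J_\theta$, this correction lands in $\Sigma(\g{H})$ by the very definition of the latter. The key point is that the assignment $b\mapsto b_XY-b_YX$ is a linear isomorphism from the space of admissible tensors onto $\Sigma(\g{H})$: surjectivity is immediate from the definition of $\Sigma(\g{H})$ as the antisymmetrization image of exactly these tensors, and injectivity is precisely the content of the injectivity lemma, since an admissible $b$ with vanishing antisymmetrization would be simultaneously antisymmetric in the last two entries and symmetric in the first two, hence zero.

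Finally I would conclude by an orthogonal-projection argument. Decompose $\pi_{\g{H}}T=S_\parallel+S_\perp$ orthogonally, with $S_\parallel\in\Sigma(\g{H})$ and $S_\perp\perp\Sigma(\g{H})$. Since $b\mapsto b_XY-b_YX$ maps bijectively onto $\Sigma(\g{H})$, there is a unique admissible $b$ with $b_XY-b_YX=-S_\parallel$, and for this $b$ one gets $\pi_{\g{H}}\widetilde T=S_\perp$, which is orthogonal to $\Sigma(\g{H})$; any other admissible $b$ would leave a nonzero $\Sigma(\g{H})$-component, so the choice is unique. Combined with the reduction of the first step, this produces the unique connection asserted. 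I expect the only genuinely delicate point to be the isomorphism claim of the middle step — concretely, verifying that admissibility of $b$ is exactly the condition cutting $\Sigma(\g{H})$ out as the antisymmetrization image — whereas the rest is routine once that isomorphism is in hand.
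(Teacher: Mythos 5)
Your proposal is correct and follows essentially the same route as the paper: reduce to the residual freedom $b$ via Lemmas \ref{EsistenzaConnessione}, \ref{LemmaConnessione} and \ref{LemmaTensoreQ}, then note that antisymmetrization in the first two slots is a linear bijection from the admissible tensors $b$ onto $\Sigma(\g{H})$ (injectivity being exactly the paper's auxiliary lemma on tensors symmetric in the first two and antisymmetric in the last two entries), so that a unique $b$ cancels the $\Sigma(\g{H})$-component of $\pi_{\g{H}}T$. You merely spell out the isomorphism and orthogonal-projection steps that the paper's own proof leaves implicit.
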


\subsection{Partial connection along $\g{T}$}

\begin{lemma}\label{LemmaConnessioneReeb}
 There exists a partial connection along $\g{T}$ for $\g{H}$ such that $\{J_{\theta}|\theta\in\ci{V}\}$ and $g$ are parallel, and any other such connection is of the form
 $$\widetilde{\nabla}_T = \nabla_T + \sum_{i<j}\beta_{ij}(T)J_{\theta^i}J_{\theta^j} +\gamma_T$$
 with $\gamma_T$ antisymmetric and such that $\gamma_TJ_{\theta}=J_{\theta}\gamma_T$ for every $T$ and $\theta$.
\end{lemma}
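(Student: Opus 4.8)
The plan is to follow the same two-step strategy already used for the partial connection along $\g{H}$ in Lemmas \ref{EsistenzaConnessione} and \ref{LemmaConnessione}, observing that both of those arguments are purely algebraic in the direction slot and therefore transfer to the direction $\g{T}$ with no essential change.

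For existence I would not construct anything new. The connection produced in Lemma \ref{EsistenzaConnessione} arises from a principal connection on the reduced bundle $P$, and a principal connection furnishes a covariant derivative in \emph{every} tangent direction of $M$, not only along $\g{H}$. Restricting that covariant derivative to sections of $\g{T}\subset TM$ yields a partial connection along $\g{T}$. Since $g$ and the whole family $\{J_{\theta}\mid\theta\in\ci{V}\}$ are parallel in all directions---the step $\nabla(I)=0$ which forced $\nabla_{\bullet}J_i=\sum_{j\ne i}a_jJ_j$ used no property of the direction, the identity endomorphism being parallel for any induced connection---they remain parallel when the direction is taken in $\g{T}$. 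This settles the first claim.

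For the classification I would repeat the argument of Lemma \ref{LemmaConnessione} almost verbatim, with the variable $X\in\g{H}$ replaced by $T\in\g{T}$. Writing $a_T=\widetilde{\nabla}_T-\nabla_T\in\End(\g{H})$, the condition $\widetilde{\nabla}g=0$ is again equivalent to $a_T$ being antisymmetric; the requirement that $\{J_{\theta}\}$ stay parallel forces $a_TJ_{\theta}-J_{\theta}a_T=J_{A(T)\theta}$ with $\bra A(T)\theta,\theta\ket=0$, and differentiating the Clifford relation exactly as in Lemma \ref{LemmaConnessione} gives the skew-symmetry $\beta_{ij}+\beta_{ji}=0$ of the coefficient one-forms (now forms on $\g{T}$ rather than on $\g{H}$). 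Subtracting the correction $\tfrac12\sum_{i<j}\beta_{ij}(T)J_{\theta^i}J_{\theta^j}$, whose commutator with each $J_{\theta^i}$ was already computed in the proof of Lemma \ref{LemmaConnessione}, leaves $\gamma_T:=a_T-\tfrac12\sum_{i<j}\beta_{ij}(T)J_{\theta^i}J_{\theta^j}$ antisymmetric and commuting with every $J_{\theta}$, which is precisely the asserted form.

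I do not expect a genuine obstacle. Every step of Lemmas \ref{EsistenzaConnessione} and \ref{LemmaConnessione} used only the pointwise Clifford-algebra structure on $\g{H}_x$ and treated the direction as an inert label, so none of those computations ever depended on whether the direction lived in $\g{H}$ or in $\g{T}$. The one point deserving an explicit line is the one flagged in the existence step, namely that a principal connection on $P$ really does differentiate along $\g{T}$ as well as along $\g{H}$; once that is granted, the lemma is a transcription of the earlier two.
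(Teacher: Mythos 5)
Your proposal is correct and is essentially the paper's own proof: the paper disposes of this lemma with the single line that the proof is the same as that of Lemma \ref{LemmaConnessione}, and your classification step is exactly that argument with the direction $X\in\g{H}$ replaced by $T\in\g{T}$, every step being algebraic in the direction slot. Your existence step --- restricting the connection of Lemma \ref{EsistenzaConnessione}, which comes from a principal connection on $P$ and hence differentiates in every direction of $TM$, in particular along $\g{T}$ --- is precisely the detail the paper leaves implicit, and it is the right one.
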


\begin{proof}
 The proof is the same of Lemma \ref{LemmaConnessione}.
\end{proof}

\begin{lemma}\label{LemmaReeb2}
 In the notation of Lemma \ref{LemmaConnessioneReeb}, there exists a unique choice of $\beta_{ij}$ such that $\nabla_TJ_{\theta}=0$ for every $T$ and $\theta$, and the addition of a tensor $\gamma$ leaves $\nabla_TJ_{\theta}$ invariant.
\end{lemma}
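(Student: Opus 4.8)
The plan is to imitate the proofs of Lemmas \ref{LemmaConnessione} and \ref{LemmaTensoreQ}, transporting the computations to the $\g{T}$ direction and replacing the target $\widetilde{Q}_\theta=0$ with the simpler one $\widetilde{\nabla}_TJ_\theta=0$. First I would record, as the $\g{T}$-analogue of \eqref{DifferenzialeJ}, that for $\widetilde{\nabla}_T=\nabla_T+a_T$ the variation of $\nabla_TJ_\theta$ is governed by the commutator $(\widetilde{\nabla}_T-\nabla_T)J_\theta=a_TJ_\theta-J_\theta a_T$. Writing $a_T=\sum_{i<j}\beta_{ij}(T)J_{\theta^i}J_{\theta^j}+\gamma_T$ as in Lemma \ref{LemmaConnessioneReeb}, and using the hypothesis that $\gamma_T$ commutes with every $J_\theta$, the $\gamma$-part drops out of this commutator. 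This already yields the second assertion: adding an admissible tensor $\gamma$ leaves each $\nabla_TJ_\theta$ unchanged.

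For the existence and uniqueness of $\beta$, I would compute the commutator of the $\beta$-part with $J_{\theta^k}$. This is verbatim the computation performed in the proof of Lemma \ref{LemmaConnessione}: every factor $J_{\theta^i}J_{\theta^j}$ with $i,j\ne k$ commutes with $J_{\theta^k}$, while the terms carrying the index $k$ survive, so that $\big(\sum_{i<j}\beta_{ij}(T)J_{\theta^i}J_{\theta^j}\big)J_{\theta^k}-J_{\theta^k}\big(\sum_{i<j}\beta_{ij}(T)J_{\theta^i}J_{\theta^j}\big)=2\sum_{j\ne k}\beta_{kj}(T)J_{\theta^j}$, with the convention $\beta_{kj}=-\beta_{jk}$ for $j<k$. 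On the other side, the parallelism of the space $\{J_\theta\}$ furnished by Lemma \ref{LemmaConnessioneReeb} gives $\nabla_TJ_{\theta^k}\in\operatorname{span}\{J_{\theta^1},\dots,J_{\theta^k}\}$, and differentiating $J_{\theta^k}^2=-I$ forces the $\theta^k$-component to vanish, so $\nabla_TJ_{\theta^k}=\sum_{j\ne k}c_{kj}(T)J_{\theta^j}$ for suitable one-forms $c_{kj}$ on $\g{T}$.

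The equation $\widetilde{\nabla}_TJ_{\theta^k}=0$ then becomes $\sum_{j\ne k}\big(c_{kj}(T)+2\beta_{kj}(T)\big)J_{\theta^j}=0$. Since the operators $J_{\theta^1},\dots,J_{\theta^k}$ are linearly independent in $\End(\g{H}_x)$ — any relation $\sum_jd_jJ_{\theta^j}=0$ is killed by composing with $J_{\theta^m}$ and taking traces, using $\operatorname{tr}(J_{\theta^j}J_{\theta^m})=-2n\,\delta_{jm}$ — this determines $\beta_{kj}=-\tfrac12c_{kj}$ for every $j\ne k$ (the precise sign being immaterial to the conclusion), hence uniquely.

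The only genuine obstacle is checking that this prescription is compatible with the antisymmetry inherent in the parametrization: the coefficients $\beta_{ij}$ are independent only for $i<j$, so the values demanded for $(k,j)$ and for $(j,k)$ must be related by $\beta_{kj}=-\beta_{jk}$. This holds precisely because the structure coefficients are themselves antisymmetric, $c_{kj}=-c_{jk}$, a fact I would prove by differentiating the Clifford relation $J_{\theta^k}J_{\theta^\ell}+J_{\theta^\ell}J_{\theta^k}=-2\bra\theta^k,\theta^\ell\ket I$ along $T$ and collecting the scalar ($I$-)components, exactly as the antisymmetry $\alfa_{ij}+\alfa_{ji}=0$ was obtained in Lemma \ref{LemmaConnessione}. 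With this consistency in hand the unique solution $\beta_{kj}=-\tfrac12c_{kj}$ is well defined, which finishes the argument.
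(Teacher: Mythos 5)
Your proposal is correct and follows essentially the paper's own route: the paper's proof is literally the one-line remark that the argument of Lemma \ref{LemmaTensoreQ} carries over, and your write-up is exactly that transport — the commutator identity $[\,\sum_{i<j}\beta_{ij}(T)J_{\theta^i}J_{\theta^j},J_{\theta^k}]=2\sum_{j\ne k}\beta_{kj}(T)J_{\theta^j}$, the expansion $\nabla_TJ_{\theta^k}=\sum_{j\ne k}c_{kj}(T)J_{\theta^j}$, and the choice $\beta_{kj}=-\tfrac12 c_{kj}$, with uniqueness from linear independence of the $J_{\theta^j}$ and invariance because $\gamma_T$ commutes with every $J_\theta$. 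Your explicit verification of the antisymmetry $c_{kj}=-c_{jk}$ (needed for the prescription to be well defined) is a point the paper leaves implicit, so it is a welcome addition rather than a deviation.
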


\begin{proof}
 The proof is the same of Lemma \ref{LemmaTensoreQ}. 
\end{proof}

Now let us define the partial torsion
$$T^{\nabla}(T,X) = \nabla_TX-\pi_{\g{H}}[T,X].$$
Then, in the notation of Lemma \ref{LemmaConnessioneReeb}, and assuming that $\beta_{ij}=0$ thanks to Lemma \ref{LemmaReeb2},
hanks to Lemma \ref{LemmaReeb2},
\begin{equation}\label{FormulaCambioTorsioneParziale}
 T^{\widetilde{\nabla}}(T,X) = \nabla_TX + \gamma_T(X) -\pi_{\g{H}}[T,X] = T^{\nabla}(T,X) + \gamma_T(X).
\end{equation}
Let us call $\Xi(\g{H})$ the space of antisymmetric tensors commuting with $J_{\theta}$ for every $\theta$.
Then, thanks to formula \eqref{FormulaCambioTorsioneParziale}, there exists a unique $\gamma$ such that the operator $T^{\widetilde{\nabla}}(T,\cdot)$ is orthogonal to $\Xi(\g{H})$ for every $T$.
Therefore we get the following.

\begin{proposizione}\label{PropConnessioneT}
	There exists a unique partial connection on $\g{H}$ along $\g{T}$ such that $g$ is parallel, $\nabla_TJ_{\theta}=0$ for every $T$ and $\theta$, and such that $T^{\widetilde{\nabla}}(T,\cdot)$ is orthogonal to $\Xi(\g{H})$ for every $T$.
\end{proposizione}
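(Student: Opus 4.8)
The plan is to follow the same strategy as in the proof of Proposition \ref{PropConnessioneH}, but in the simpler situation in which the modification enters the torsion directly rather than through an antisymmetrization. First I would invoke Lemmas \ref{LemmaConnessioneReeb} and \ref{LemmaReeb2}: the former produces a partial connection $\nabla$ along $\g{T}$ for which $g$ and the bundle $\{J_{\theta}\}$ are parallel and describes the residual freedom, and the latter pins down the coefficients $\beta_{ij}$ by the requirement $\nabla_TJ_{\theta}=0$. Once $\beta_{ij}=0$ is fixed, the only remaining freedom among connections with $g$ parallel and $\nabla_TJ_{\theta}=0$ is the addition of a term $\gamma$ with $\gamma_T$ antisymmetric and commuting with every $J_{\theta}$; that is, $\gamma$ is a $\Xi(\g{H})$-valued $1$-form on $\g{T}$, and conversely every such $\gamma$ leaves $g$ parallel and (by Lemma \ref{LemmaReeb2}) leaves $\nabla_TJ_{\theta}=0$ intact, hence yields an admissible connection.

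Next I would record that for each fixed $T$ the partial torsion is an endomorphism of $\g{H}$ depending linearly on $T$. Since $\nabla_TX\in\g{H}$ and $\pi_{\g{H}}[T,X]\in\g{H}$, the expression $T^{\nabla}(T,X)=\nabla_TX-\pi_{\g{H}}[T,X]$ is $\g{H}$-valued, and a direct check, using $\pi_{\g{H}}X=X$ for $X\in\g{H}$ and $\pi_{\g{H}}T=0$ for $T\in\g{T}$, shows that it is $C^{\infty}(M)$-linear in both arguments. Thus $T\mapsto T^{\nabla}(T,\cdot)$ is a linear map into $\operatorname{End}(\g{H})$. By formula \eqref{FormulaCambioTorsioneParziale}, passing from $\nabla$ to $\widetilde{\nabla}=\nabla+\gamma$ replaces this endomorphism by $T^{\widetilde{\nabla}}(T,\cdot)=T^{\nabla}(T,\cdot)+\gamma_T$, so the problem reduces, pointwise and for each $T$, to adjusting the element $T^{\nabla}(T,\cdot)\in\operatorname{End}(\g{H})$ by an element of $\Xi(\g{H})$ so that it becomes orthogonal to $\Xi(\g{H})$.

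At this point I would equip $\operatorname{End}(\g{H}_x)$ with the Hilbert--Schmidt inner product induced by $g$ and use the orthogonal decomposition $\operatorname{End}(\g{H}_x)=\Xi(\g{H}_x)\oplus\Xi(\g{H}_x)^{\perp}$. For existence, I set $\gamma_T$ equal to minus the orthogonal projection of $T^{\nabla}(T,\cdot)$ onto $\Xi(\g{H}_x)$, so that $T^{\widetilde{\nabla}}(T,\cdot)$ is orthogonal to $\Xi(\g{H})$ by construction. This $\gamma_T$ lies in $\Xi(\g{H})$ by definition of the projection and is linear in $T$, because $T\mapsto T^{\nabla}(T,\cdot)$ is linear and the projection is a fixed linear map; hence $\widetilde{\nabla}=\nabla+\gamma$ is an admissible connection with the desired orthogonality. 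For uniqueness, if $\gamma$ and $\gamma'$ both produce torsions orthogonal to $\Xi(\g{H})$, then for each $T$ the difference $\gamma_T-\gamma'_T$ equals the difference of the two torsion operators and is therefore orthogonal to $\Xi(\g{H})$; since it also lies in $\Xi(\g{H})$, it vanishes, so $\gamma=\gamma'$.

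I do not expect a serious obstacle here. Unlike Proposition \ref{PropConnessioneH}, where the correction entered the torsion through the antisymmetrization $b_XY-b_YX$ and hence required the injectivity lemma on tensors of type $(0,3)$ to recover the correction uniquely, here $\gamma_T$ appears in $T^{\widetilde{\nabla}}(T,\cdot)$ as itself, so the map from the available freedom $\Xi(\g{H})$ to the torsion correction is the identity and is trivially injective. The only points requiring care are the verification that $T^{\nabla}(T,\cdot)$ is tensorial and $\g{H}$-valued, so that the projection onto $\Xi(\g{H})$ is meaningful, and the remark that the entire construction is pointwise in $x$ and purely algebraic, whence the resulting $\gamma$ is smooth because $T^{\nabla}$ is.
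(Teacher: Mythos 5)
Your proposal is correct and takes essentially the same approach as the paper: it fixes the connection via Lemmas \ref{LemmaConnessioneReeb} and \ref{LemmaReeb2}, observes through formula \eqref{FormulaCambioTorsioneParziale} that the residual freedom $\gamma_T\in\Xi(\g{H})$ shifts the partial torsion by exactly itself, and concludes by orthogonal projection onto $\Xi(\g{H})$. The only difference is that you spell out details the paper leaves implicit (tensoriality of $T^{\nabla}$, the Hilbert--Schmidt projection, and the uniqueness argument), which is harmless.
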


Therefore we can now define a connection on $M$.

\begin{definizione}
 Given a contact manifold of Heisenberg type, we define $\nabla$ as the connection for which $\g{H}$ is parallel, whose reduction to $\g{H}$ restricts to the partial connection defined in Proposition \ref{PropConnessioneH} on $\g{H}$ and to the partial connection defined in Proposition \ref{PropConnessioneT} on $\g{T}$, and for which the Reeb vector fields are parallel.
\end{definizione}

\begin{osservazione}
 In the case of $G$ with the contact structure of Heisenberg type defined in Remark \ref{Gruppi}, our connection coincides with the connection for which left invariant vector fields are parallel.
\end{osservazione}

\begin{osservazione}
 In the case $G=\H^n$, which corresponds to contact Riemannian manifolds as seen in Remark \ref{OssRiemCont}, our connection coincides with the Hermitian Tanno connection defined by Nagase in \cite{N}, which in the case of CR manifolds coincides with the Tanaka-Webster connection.
\end{osservazione}

\section{Scalar curvature}\label{SezioneFormula}
Since we defined a connection on $\g{H}$ we get the curvature tensor
$$R(X,Y)Z = \nabla_X\nabla_YZ - \nabla_Y\nabla_XZ -\nabla_{[X,Y]}Z$$
for $X,Y\in TM$ and $Z\in\g{H}$.
By restricting $X$ and $Y$ to $\g{H}$, and contracting this tensor two times with $g$, we get a scalar curvature invariant $R$.
Explicitely, if $X_1,\ldots,X_{2n}$ is an orthonormal frame of $\g{H}$,
$$R = \sum_{\alfa,\beta=1}^{2n}g(R(X_{\alfa},X_{\beta})X_{\alfa},X_{\beta}). $$
Given a conformal change as defined in Subsection \ref{SezioneCambioConforme}, we want to compute the scalar curvature of the new connection.

First we will compute the change of the connection.

Let us consider the operator of antisymmetrization with respect with the first two variables from the space of operators antisymmetric with respect to the second and third variable commuting with $J_{\theta}$ for all $\theta$ to $\Sigma$, and call $\Theta$ its inverse.

In the following it is useful to notice that symmetry and antisymmetry do not depend on the conformal metric, and in the same way $\Sigma(\g{H})$, $\Xi(\g{H})$ and the associated projections.

\begin{proposizione}
 $$\widetilde{\nabla}_X = \nabla_X + \frac{Xf}{2f}Y + \frac{1}{f}c_X(Y,df)$$
 where
 $$c_X(Y,df) = \Theta\left(\pi_{\Sigma}\left((X,Y)\mapsto \frac{Yf}{2f}X -\frac{Xf}{2f}Y + \alfa(X,Y,df)\right)\right)$$
 and
 $$g(\alfa(X,Y,df),Z) = \sum_{j=1}^kg(X,J_{\theta^j}Y)df(J_{\theta^j}Z) . $$
\end{proposizione}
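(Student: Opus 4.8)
The plan is to write $\widetilde\nabla_X Y = \nabla_X Y + a_X Y$ for $X,Y\in\g{H}$ and to pin down the endomorphism-valued $1$-form $a_X$ by imposing, for the conformally changed data $\widetilde g = fg$, $\widetilde{\ci{V}}=\{f\theta\mid\theta\in\ci{V}\}$, the three conditions characterizing the connection of Proposition \ref{PropConnessioneH}. The starting point is the observation from Subsection \ref{SezioneCambioConforme} that $J_{f\theta}=J_\theta$, so the Clifford operators are literally unchanged and only the metric and the normalization of $\ci{V}$ are rescaled; I will also use that $g$-symmetry and $g$-antisymmetry, the subspace $\Sigma(\g{H})$ and the associated projections are conformally invariant. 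Metric compatibility comes first: imposing $\widetilde\nabla\widetilde g=0$ with $\widetilde g=fg$ gives $g(a_XY,Z)+g(Y,a_XZ)=\frac{Xf}{f}g(Y,Z)$, so the symmetric part of $a_X$ is forced to be the trace term $\frac{Xf}{2f}\id$, producing the first summand of the formula. I then write $a_X=\frac{Xf}{2f}\id+\hat a_X$ with $\hat a_X$ antisymmetric, and the remaining two conditions determine $\hat a_X$.

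Next I would reduce $\hat a_X$ using parallelism of the Clifford operators together with $\widetilde Q_\theta=0$. Since $\frac{Xf}{2f}\id$ commutes with every $J_\theta$, it drops out of all the commutators $[a_X,J_\theta]$ appearing in \eqref{DifferenzialeJ}, so the analysis of Lemma \ref{LemmaConnessione} applies verbatim to $\hat a_X$ and gives $\hat a_X=\sum_{i<j}\alfa_{ij}(X)J_{\theta^i}J_{\theta^j}+b_X$ with $b_X$ antisymmetric and commuting with every $J_\theta$. The condition $\widetilde Q_\theta=0$, combined with $Q_\theta=0$ for the base connection and the injectivity computation of Lemma \ref{LemmaTensoreQ} (again the trace term does not enter \eqref{DifferenzaQ}), fixes the coefficients $\alfa_{ij}$; only the commuting part $b_X$ then remains free.

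Finally I would fix $b_X$ from the torsion normalization $\pi_{\g{H}}\widetilde T\perp\Sigma(\g{H})$. By the torsion change formula \eqref{FormulaCambioTorsione} one has $\pi_{\g{H}}\widetilde T(X,Y)=\pi_{\g{H}}T(X,Y)+a_XY-a_YX$; since $\pi_{\g{H}}T\perp\Sigma(\g{H})$ already and $(X,Y)\mapsto b_XY-b_YX$ is exactly the antisymmetrization in the first two slots of an antisymmetric, $J$-commuting tensor, it lands in $\Sigma(\g{H})$ and is inverted by $\Theta$. Setting the $\Sigma$-component of $\pi_{\g{H}}\widetilde T$ to zero therefore expresses $b_X$ as $\Theta$ applied to $\pi_\Sigma$ of the antisymmetrized trace contribution $\frac{Yf}{2f}X-\frac{Xf}{2f}Y$, which is the content of the stated $c_X(Y,df)$.

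I expect the main obstacle to be the precise evaluation of this last $\Sigma$-projection, and in particular the appearance of the correction $\alfa(X,Y,df)$ with $g(\alfa(X,Y,df),Z)=\sum_j g(X,J_{\theta^j}Y)df(J_{\theta^j}Z)$ together with the overall weight $\frac1f$. The point is that the trace contribution $\frac{Yf}{2f}X-\frac{Xf}{2f}Y$ is not $J$-commuting, so extracting its component along the $J$-commuting subspace $\Sigma(\g{H})$ forces one to re-express the $df$-terms through products of the Clifford operators, exactly as in the identity $f\,i_Xd\theta_2(Z)=-\frac1f df(J_{\theta_1}J_{\theta_2}Z)$ established in Subsection \ref{SezioneCambioConforme}; this is what generates $\alfa(X,Y,df)$ and the correct conformal weight. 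Carefully bookkeeping these weights, and verifying that the resulting $b_X$ is genuinely antisymmetric and commutes with every $J_\theta$ so that $\Theta$ legitimately applies, is the delicate part of the computation.
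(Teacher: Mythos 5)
Your first two steps do match the paper's argument: metric compatibility of $\widetilde{\nabla}$ with $\widetilde{g}=fg$ forces the symmetric part $\frac{Xf}{2f}\id$, and the reduction $a_X=\frac{Xf}{2f}\id+\sum_{i<j}\alfa_{ij}(X)J_{\theta^i}J_{\theta^j}+c_X$ with $\alfa_{ij}=0$ (using $J_{f\theta}=J_{\theta}$ and formula \eqref{DifferenzialeJ}, into which the trace term does not enter) is exactly how the paper proceeds. The gap is in the torsion step. You write ``since $\pi_{\g{H}}T\perp\Sigma(\g{H})$ already'' and then determine $c_X$ by projecting only the trace contribution $\frac{Yf}{2f}X-\frac{Xf}{2f}Y$. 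But the normalization characterizing the new connection is that $\widetilde{\pi}_{\g{H}}\widetilde{T}$ be orthogonal to $\Sigma(\g{H})$, where $\widetilde{\pi}_{\g{H}}$ is the projection along the \emph{new} Reeb space: while $\Sigma(\g{H})$, $\Xi(\g{H})$ and (anti)symmetry are conformally invariant, the splitting $TM=\g{H}\oplus\g{T}$ is not. The Reeb fields move, $\widetilde{T}_j=\frac{1}{f}T_j+X_j$ with $i_{X_j}d\theta^j|_{\g{H}}=\frac{1}{f^2}df|_{\g{H}}$, and the old torsion has the vertical part $\sum_j g(X,J_{\theta^j}Y)T_j$; rewriting $T_j=f\widetilde{T}_j-fX_j$ shows that under the new splitting this vertical part sheds the horizontal piece $-f\sum_j g(X,J_{\theta^j}Y)X_j$, and since $-fg(X_j,Z)=f\,d\theta^j(X_j,J_{\theta^j}Z)=\frac{1}{f}\,df(J_{\theta^j}Z)$, this piece is exactly $\alfa(X,Y,df)$ with its conformal weight. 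That is the source of the term $g(\alfa(X,Y,df),Z)=\sum_j g(X,J_{\theta^j}Y)\,df(J_{\theta^j}Z)$ in the statement.

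So your mechanism for producing $\alfa$ is a misattribution, not merely a different route: $\alfa$ enters as an extra summand \emph{inside} the argument of $\pi_{\Sigma}$, on the same footing as the trace term, because it is part of the torsion $\widetilde{\pi}_{\g{H}}\widetilde{T}$ being normalized; it is not the output of evaluating $\pi_{\Sigma}$ on the (non $J$-commuting) trace term. Had you completed your argument as written, you would have obtained $c_X=\Theta\bigl(\pi_{\Sigma}\bigl((X,Y)\mapsto\frac{Yf}{2f}X-\frac{Xf}{2f}Y\bigr)\bigr)$, i.e.\ the stated formula with $\alfa$ missing, and the resulting connection would fail the requirement $\widetilde{\pi}_{\g{H}}\widetilde{T}\perp\Sigma(\g{H})$; conversely, under your reading, the proposition's formula would be double counting. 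The repair is precisely the paper's computation: decompose $T(X,Y)=\pi_{\g{H}}T(X,Y)+\sum_j g(X,J_{\theta^j}Y)T_j$ using $\theta^j(T(X,Y))=d\theta^j(X,Y)=g(X,J_{\theta^j}Y)$, substitute $T_j=f\widetilde{T}_j-fX_j$, and pair with $Z\in\g{H}$ to extract the horizontal correction before applying $\pi_{\Sigma}$ and $\Theta$.
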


\begin{proof}
 Define $a_X = \widetilde{\nabla}_X-\nabla_X$. Since $\widetilde{g}=fg$, in order that $\widetilde{\nabla}\widetilde{g} = 0$
 $$0 = (\widetilde{\nabla}_X(fg))(Y,Z) = X(fg(Y,Z)) - fg(\widetilde{\nabla}_XY,Z) - fg(Y,\widetilde{\nabla}_XZ) =$$
 $$= (Xf)g(Y,Z) + fX(g(Y,Z)) - fg(\nabla_XY,Z) +$$
 $$- fg(a_XY,Z) - fg(Y,\nabla_XZ) - fg(Y,a_XZ) =$$
 $$= (Xf)g(Y,Z) + f(\nabla_Xg)(Y,Z) - fg(a_XY,Z) - fg(Y,a_XZ) =$$
 $$= (Xf)g(Y,Z)  - fg(a_XY,Z) - fg(Y,a_XZ) $$
 and therefore $a_XY=\frac{Xf}{2f}Y + b_XY$ with $b_X$ antisymmetric.
 By computations similar to the ones in Lemma \ref{LemmaConnessione}
 $$b_X = \sum_{i<j}\alfa_{ij}(X)J_{\theta^i}J_{\theta^j}+c_X$$
 with $c_X$ antisymmetric and commuting with $J_{\theta}$ for every $\theta$.
 By formula \eqref{DifferenzialeJ} it is easy to deduce that $\alfa_{ij}=0$, therefore
 $$ \widetilde{\pi_{\g{H}}}\widetilde{T}(X,Y)=$$
 \begin{equation}\label{CambioConformeTorsione}
   \widetilde{\pi_{\g{H}}}T(X,Y) + a_X(Y)-a_Y(X) = \widetilde{\pi_{\g{H}}}T(X,Y) + \frac{Xf}{2f}Y + c_XY -\frac{Yf}{2f}X - c_YX.
 \end{equation}
 But since $\g{T}$ is not preserved by conformal change, neither is $\pi_{\g{H}}$.
 Since
 $$\theta^j(T(X,Y)) = -\theta^j([X,Y]) = d\theta^j(X,Y)= g(X,J_{\theta}Y)$$
 we have that
 $$T(X,Y) = \sum_{j=1}^kg(X,J_{\theta^j}Y)T_j + (\pi_{\g{H}}T)(X,Y).$$
 We saw in Subsection \ref{SezioneCambioConforme} that
 $$\widetilde{T}^j = \frac{1}{f}T^j + X^j$$
 with $i_{X^j}d\theta^j|_{\g{H}} = \frac{1}{f^2}df|_{\g{H}}$, hence
 $$T(X,Y) = (\pi_{\g{H}}T)(X,Y) + \sum_{j=1}^kg(X,J_{\theta^j}Y)(f\widetilde{T}_j - fX_j)$$
 and taking the scalar product with $Z\in\g{H}$ we get
 $$g(\widetilde{\pi}_{\g{H}}T(X,Y),Z) = - f\sum_{j=1}^kg(X,J_{\theta^j}Y)g(X_j,Z) =$$
 $$= f\sum_{j=1}^kg(X,J_{\theta^j}Y)d\theta^j(X_j,J_{\theta^j}Z) = \frac{1}{f}\sum_{j=1}^kg(X,J_{\theta^j}Y)df(J_{\theta^j}Z) =$$
 $$= \frac{1}{f}\left(\sum_{j=1}^kg(X,J_{\theta^j}Y)J_{\theta^j}Z\right)\cdot \nabla^{\g{H}}f = g(\alfa(X,Y),Z)$$
 which together with formula \eqref{CambioConformeTorsione} implies the thesis.
\end{proof}

\begin{proposizione}
 $$\widetilde{\nabla}_{\widetilde{T}_{\theta}} =  \frac{1}{f}\nabla_{T_{\theta}} + \nabla_X + \frac{Xf}{2f}Y +\frac{T_{\theta}f}{2f^2}Y + \frac{1}{2f}\sum_{i=1}^k(\nabla_XJ_{\theta^i})J_{\theta^i}Y+ \frac{1}{f}\gamma_{T_{\theta}}Y $$
 where
 $$\gamma_{T_{\theta}}Y =\pi_{\Xi}\left(Y\mapsto -\frac{1}{2}\sum_{i=1}^k(\nabla_XJ_{\theta^i})J_{\theta^i}Y+\right.$$
 $$\left. - \nabla_YX - \pi_{\g{H}}T(X,Y)) +\frac{1}{f^2}\beta(Y,df) +\frac{1}{f^3}\sigma(Y,|df|^2) \right).$$
\end{proposizione}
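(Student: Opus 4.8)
The plan is to proceed exactly as in the preceding proposition, but along the Reeb direction, using as reference connection the old connection $\nabla$ evaluated in the direction of the \emph{new} Reeb vector. Recall from Subsection \ref{SezioneCambioConforme} that $\widetilde{T}_{\theta}=\frac{1}{f}T_{\theta}+X$ with $X=X_{\theta,f}\in\g{H}$ determined by $i_{X}d\theta|_{\g{H}}=\frac{|\theta|^2}{f^2}df|_{\g{H}}$; in particular $X$ is of order $\frac{1}{f^2}df$. Since $\widetilde{T}_{\theta}$ lies in the new Reeb subspace $\widetilde{\g{T}}$, the operator $\widetilde{\nabla}_{\widetilde{T}_{\theta}}$ is the partial connection along $\widetilde{\g{T}}$ of the new structure, hence characterized by the three conditions of Proposition \ref{PropConnessioneT}: parallelism of $\widetilde{g}=fg$, the identity $\widetilde{\nabla}_{\widetilde{T}_{\theta}}J_{\theta}=0$ (note that $J_{f\theta}=J_{\theta}$, so the Clifford operators are literally unchanged), and orthogonality of the partial torsion $T^{\widetilde{\nabla}}(\widetilde{T}_{\theta},\cdot)$ to $\Xi(\g{H})$. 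By linearity in the direction, $\frac{1}{f}\nabla_{T_{\theta}}+\nabla_{X}=\nabla_{\widetilde{T}_{\theta}}$ is the old connection in the direction $\widetilde{T}_{\theta}$, so I would set $E:=\widetilde{\nabla}_{\widetilde{T}_{\theta}}-\nabla_{\widetilde{T}_{\theta}}$ and determine the tensor $E$ from these three conditions.

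First, imposing $\widetilde{\nabla}_{\widetilde{T}_{\theta}}\widetilde{g}=0$ with $\widetilde{g}=fg$ and using $\nabla g=0$, the same computation as in the preceding proposition shows that the symmetric part of $E$ equals $\frac{(\widetilde{T}_{\theta})f}{2f}\id$; expanding $(\widetilde{T}_{\theta})f=\frac{1}{f}T_{\theta}f+Xf$ produces the two terms $\frac{T_{\theta}f}{2f^2}Y$ and $\frac{Xf}{2f}Y$. It remains to find the antisymmetric part of $E$, which by Lemma \ref{LemmaConnessioneReeb} splits as $\sum_{i<j}\beta_{ij}J_{\theta^i}J_{\theta^j}+\gamma$ with $\gamma\in\Xi$. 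Next, imposing parallelism of the Clifford structure: by formula \eqref{DifferenzialeJ}, $\widetilde{\nabla}_{\widetilde{T}_{\theta}}J_{\theta}=\nabla_{\widetilde{T}_{\theta}}J_{\theta}+(EJ_{\theta}-J_{\theta}E)$; since $\gamma$ and the scalar part commute with $J_{\theta}$, and since $\nabla_{T_{\theta}}J_{\theta}=0$ because the old connection already satisfies Proposition \ref{PropConnessioneT}, this reduces to the requirement $(\sum_{i<j}\beta_{ij}J_{\theta^i}J_{\theta^j})J_{\theta}-J_{\theta}(\sum_{i<j}\beta_{ij}J_{\theta^i}J_{\theta^j})=-\nabla_{X}J_{\theta}$. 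This is solved exactly as in Lemma \ref{LemmaTensoreQ} and Lemma \ref{LemmaReeb2} through the commutator identity for the products $J_{\theta^i}J_{\theta^j}$, giving the $\operatorname{span}\{J_{\theta^i}J_{\theta^j}\}$-part of $E$ as a multiple of $\sum_{i}(\nabla_{X}J_{\theta^i})J_{\theta^i}$, namely the term $\frac{1}{2f}\sum_{i}(\nabla_{X}J_{\theta^i})J_{\theta^i}Y$. It is worth emphasizing the contrast with the preceding proposition: there the analogous $J_{\theta^i}J_{\theta^j}$-coefficients vanished, whereas here they do not, precisely because the new Reeb vector acquires a horizontal component $X$ along which $\nabla$ rotates the Clifford operators.

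Finally, the $\Xi$-part $\gamma$ is fixed, as in Proposition \ref{PropConnessioneT} and formula \eqref{FormulaCambioTorsioneParziale}, by requiring the partial torsion $T^{\widetilde{\nabla}}(\widetilde{T}_{\theta},\cdot)=\widetilde{\nabla}_{\widetilde{T}_{\theta}}(\cdot)-\widetilde{\pi}_{\g{H}}[\widetilde{T}_{\theta},\cdot]$ to be orthogonal to $\Xi(\g{H})$, so that $\gamma$ equals $\pi_{\Xi}$ applied to the already determined remainder of this torsion, which yields $\frac{1}{f}\gamma_{T_{\theta}}$. Here two effects must be tracked carefully, and this is where I expect the main difficulty to lie. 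The projection $\widetilde{\pi}_{\g{H}}$ onto $\g{H}$ along $\widetilde{\g{T}}$ differs from $\pi_{\g{H}}$, exactly as computed in the preceding proposition; this accounts for the $-\pi_{\g{H}}T(X,Y)$ contribution. Moreover, in the bracket $[\widetilde{T}_{\theta},Y]=\frac{1}{f}[T_{\theta},Y]+\frac{Yf}{f^2}T_{\theta}+[X,Y]$ the derivatives falling on the conformal factors and on $X\sim\frac{1}{f^2}df$ generate the term $-\nabla_{Y}X$ together with the lower-order pieces linear and quadratic in $df$, namely $\frac{1}{f^2}\beta(Y,df)$ and $\frac{1}{f^3}\sigma(Y,|df|^2)$. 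Collecting the $\Xi$-components of all these terms gives $\gamma_{T_{\theta}}$ as stated, and assembling the symmetric part, the $J_{\theta^i}J_{\theta^j}$-part and $\frac{1}{f}\gamma_{T_{\theta}}$ reproduces the claimed formula. The bookkeeping of the $df$- and $|df|^2$-terms, and of the precise numerical coefficients, is routine but delicate, and is the only genuinely laborious step.
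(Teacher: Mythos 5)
Your proposal follows the same strategy as the paper's proof: write $\widetilde{T}_{\theta}=\frac{1}{f}T_{\theta}+X$ and determine the correction tensor from the three conditions of Proposition \ref{PropConnessioneT} applied to the new structure --- compatibility with $\widetilde{g}=fg$ gives the trace part, $\widetilde{\nabla}_{\widetilde{T}_{\theta}}J_{\theta}=0$ together with $J_{f\theta}=J_{\theta}$ gives the $\operatorname{span}\{J_{\theta^i}J_{\theta^j}\}$ part sourced by $\nabla_XJ_{\theta}$, and orthogonality of the partial torsion to $\Xi(\g{H})$ gives the commuting part via the computation of $\widetilde{\pi}_{\g{H}}[\widetilde{T}_{\theta},Y]$. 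The only structural deviation is the choice of reference connection: the paper splits $\widetilde{\nabla}_{\widetilde{T}_{\theta}}=\frac{1}{f}\widetilde{\nabla}_{T_{\theta}}+\widetilde{\nabla}_X$, reuses the preceding proposition for $\widetilde{\nabla}_X$, and takes $\eta_{T}=\widetilde{\nabla}_{T_{\theta}}-\nabla_{T_{\theta}}$ as the unknown, whereas you subtract the old connection $\nabla_{\widetilde{T}_{\theta}}=\frac{1}{f}\nabla_{T_{\theta}}+\nabla_X$ all at once; these are equivalent bookkeeping choices.

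That bookkeeping, however, is exactly where your write-up has a gap: the coefficients you quote cannot solve the equations you yourself set up. In your framework the Clifford condition reads $EJ_{\theta}-J_{\theta}E=-\nabla_{\widetilde{T}_{\theta}}J_{\theta}=-\nabla_XJ_{\theta}$ (using $\nabla_{T_{\theta}}J_{\theta}=0$), an equation in which $f$ does not appear; hence its solution in $\operatorname{span}\{J_{\theta^i}J_{\theta^j}\}$ is an $f$-\emph{independent} multiple of $B_0=\sum_{i}(\nabla_XJ_{\theta^i})J_{\theta^i}$ (indeed $B_0J_{\theta^j}-J_{\theta^j}B_0=-4\,\nabla_XJ_{\theta^j}$, so the multiple is $\frac{1}{4}$), and in particular it cannot equal $\frac{1}{2f}\sum_{i}(\nabla_XJ_{\theta^i})J_{\theta^i}$ as you assert. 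The same issue affects the commuting part: in your scheme the torsion condition fixes $\pi_{\Xi}(E)$ to be the projection $\pi_{\Xi}(\cdots)$ itself, with no prefactor $\frac{1}{f}$, while the stated formula carries $\frac{1}{f}\gamma_{T_{\theta}}$. In the paper's scheme those prefactors arise because the unknown $\eta_T$ enters the final assembly multiplied by $\frac{1}{f}$, so its determining equations carry compensating powers of $f$ (the Clifford condition there is $E_{T_{\theta}}J_{\theta}-J_{\theta}E_{T_{\theta}}=-f\,\nabla_XJ_{\theta}$); in yours such factors cannot arise at all. So carrying out your plan literally produces a formula whose $f$-dependence disagrees with the one stated, and reconciling the two --- tracking precisely where each factor of $f$ (and of $2$) enters, including the term $\frac{1}{f}c_X(Y,df)$ coming from $\widetilde{\nabla}_X-\nabla_X$ that your decomposition silently absorbs into the $\Xi$-part --- is not the ``routine'' residue you defer to the end, but the actual content required to land on the stated formula.
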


\begin{proof}
 We have
 \begin{equation}\label{FormulaCambioConformeReeb}
  \widetilde{\nabla}_{\widetilde{T}_{\theta}} = \frac{1}{f}\widetilde{\nabla}_{T_{\theta}} + \widetilde{\nabla}_{X} = \frac{1}{f}\nabla_{T_{\theta}} + \frac{1}{f}\eta_T + \nabla_X + \frac{Xf}{2f}Y + \frac{1}{f}c_X(Y,df)
 \end{equation}
 Since $\widetilde{\nabla}_{X}\widetilde{g}=0$, the fact that $\widetilde{\nabla}_{\widetilde{T}_{\theta}}\widetilde{g}=0$ is equivalent to
 $$0= (\widetilde{\nabla}_{T_{\theta}}\widetilde{g})(X,Y) =$$
 $$= (\widetilde{\nabla}_{T_{\theta}}(fg))(X,Y) = T_{\theta}(fg(X,Y)) - fg(\widetilde{\nabla}_{T_{\theta}}X,Y) - fg(X,\widetilde{\nabla}_{T_{\theta}}Y) = $$
 $$ =   (T_{\theta}f)g(X,Y)  + f(\nabla_{T_{\theta}}g)(X,Y) - fg(\eta_{T_{\theta}}X,Y) - fg(X,\gamma_{T_{\theta}}Y) = $$
 $$ =   (T_{\theta}f)g(X,Y) - fg(\eta_{T_{\theta}}X,Y) - fg(X,\eta_{T_{\theta}}Y) $$
 therefore $\eta_{T_{\theta}} = \frac{T_{\theta}f}{2f}I + E_{T_{\theta}}$ with $E_{T_{\theta}}$ antisymmetric.
 Using formula \eqref{FormulaCambioConformeReeb},
 $$\widetilde{\nabla}_{\widetilde{T}_{\theta}}J = \nabla_XJ + E_{T_{\theta}}J - JE_{T_{\theta}}$$
 which implies that
 $$E_{T_{\theta}} = \frac{1}{2}\sum_{i=1}^k(\nabla_XJ_{\theta^i})J_{\theta^i}+\gamma_{T_{\theta}}$$
 with $\gamma_{T_{\theta}}$ antisymmetric and commuting with $J_{\theta}$ for every $\theta$.
 
 Now the partial torsion after the conformal change is
 $$T^{\widetilde{\nabla}}(\widetilde{T}_{\theta},Y) = \widetilde{\nabla}_{\widetilde{T}_{\theta}}Y - \widetilde{\pi}_{\g{H}}[\widetilde{T}_{\theta},Y] =$$
 $$= \frac{1}{f}\nabla_{T_{\theta}}Y + \frac{T_{\theta}f}{2f^2}Y + \frac{1}{2}\sum_{i=1}^k(\nabla_XJ_{\theta^i})J_{\theta^i}+$$
 $$+\gamma_{T_{\theta}}Y + \nabla_XY + \frac{Xf}{2f}Y + \frac{1}{f}c_X(Y,df) - \widetilde{\pi}_{\g{H}}[\widetilde{T}_{\theta},Y] $$
 It holds that
 $$\widetilde{\pi}_{\g{H}}[\widetilde{T}_{\theta},Y] = [\widetilde{T}_{\theta},Y] - \sum_{i=1}^k\widetilde{\theta}^i([\widetilde{T}_{\theta},Y])\widetilde{T}_{\theta^i} =$$
 $$= [\widetilde{T}_{\theta},Y] + f\sum_{i=1}^kd\theta^i(\widetilde{T}_{\theta},Y)\widetilde{T}_{\theta^i} =$$
 $$= [\frac{1}{f}T_{\theta}+X,Y] + f\sum_{i=1}^kd\theta^i(\frac{1}{f}T_{\theta}+X,Y)\left(\frac{1}{f}T_{\theta^i} + X_i \right)=$$
 $$= \frac{1}{f}[T_{\theta},Y] - Y\left(\frac{1}{f}\right)T_{\theta} + [X,Y] + \sum_{i=1}^kd\theta^i(T_{\theta},Y)\left(\frac{1}{f}T_{\theta^i}+ X_i \right)+$$
 $$ + f\sum_{i=1}^kd\theta^i(X,Y)\left(\frac{1}{f}T_{\theta^i} + X_i \right) =$$
 $$= \frac{1}{f}[T_{\theta},Y] - Y\left(\frac{1}{f}\right)T_{\theta} + [X,Y] + \frac{1}{f}\sum_{i=1}^kd\theta^i(T_{\theta},Y)T_{\theta^i} +$$
 $$+ \sum_{i=1}^kd\theta^i(T_{\theta},Y)X_i + \sum_{i=1}^kd\theta^i(X,Y)T_{\theta^i}+ f\sum_{i=1}^kd\theta^i(X,Y)X_i=$$
 $$= \frac{1}{f}\pi_{\g{H}}[T_{\theta},Y] - Y\left(\frac{1}{f}\right)T_{\theta} + \pi_{\g{H}}[X,Y] +$$
 $$+ \sum_{i=1}^kd\theta^i(T_{\theta},Y)X_i + f\sum_{i=1}^kd\theta^i(X,Y)X_i=$$
 $$= \frac{1}{f}\pi_{\g{H}}[T_{\theta},Y] + \pi_{\g{H}}[X,Y] + \frac{1}{f^2}\beta(Y,df) + \frac{1}{f^3}\sigma(Y,|df|^2).$$
 So
 $$T^{\widetilde{\nabla}}(\widetilde{T}_{\theta},Y) =$$
 $$= \frac{1}{f}\nabla_{T_{\theta}}Y + \frac{T_{\theta}f}{2f^2}Y + \frac{1}{2}\sum_{i=1}^k(\nabla_XJ_{\theta^i})J_{\theta^i}+\gamma_{T_{\theta}}Y + \nabla_XY + \frac{Xf}{2f}Y + \frac{1}{f}c_X(Y,df)+$$
 $$ - \frac{1}{f}\pi_{\g{H}}[T_{\theta},Y] - \pi_{\g{H}}[X,Y] - \frac{1}{f^2}\beta(Y,df) -\frac{1}{f^3}\sigma(Y,|df|^2) =$$
 $$= \frac{1}{f}T^{\nabla}(T_{\theta},Y) + \frac{T_{\theta}f}{2f^2}Y + \frac{1}{2}\sum_{i=1}^k(\nabla_XJ_{\theta^i})J_{\theta^i}+\gamma_{T_{\theta}}Y + \nabla_YX +\pi_{\g{H}}T(X,Y) +$$
 $$+ \frac{Xf}{2f}Y + \frac{1}{f}c_X(Y,df) - \frac{1}{f^2}\beta(Y,df) -\frac{1}{f^3}\sigma(Y,|df|^2)$$
 Therefore
 $$\gamma_{T_{\theta}}Y =\pi_{\Xi}\left(Y\mapsto -\frac{1}{2}\sum_{i=1}^k(\nabla_XJ_{\theta^i})J_{\theta^i}Y - \nabla_YX - \pi_{\g{H}}T(X,Y))+\right.$$
 $$\left. +\frac{1}{f^2}\beta(Y,df) +\frac{1}{f^3}\sigma(Y,|df|^2) \right)-\frac{1}{f}c_X(Y,df).$$
\end{proof}

In order to state our formula, we define the sublaplacian $\Delta u$ of a function $u$ on $M$ as the trace of the restriction of $\nabla^2f$ on $\g{H}$, that is, given an orthonormal frame $X_1,\ldots,X_{2n}$ of $\g{H}$,
$$\Delta u = \sum_{\alfa=1}^{2n}\nabla^2_{X_{\alfa},X_{\alfa}}u.$$

\begin{teorema}\label{TeoremaFormulaConforme}
 There exist a constant $C_G$ such that, for any contact manifold of Heisenberg type modelled on $G$, calling the conformal factor $f=u^{\frac{4}{Q-2}}$ it holds that
 $$\widetilde{K} = u^{-\frac{Q+2}{Q-2}}\left(-C\Delta u + Ku \right).$$
\end{teorema}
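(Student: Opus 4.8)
The plan is to reduce the statement to a single double contraction of the curvature difference produced by the conformal change, feeding into it the two preceding propositions, which already give the endomorphism-valued $1$-form $a=\widetilde{\nabla}-\nabla$ explicitly in the horizontal and in the Reeb directions. First I would rewrite $\widetilde{K}$ in terms of the old metric: taking a $g$-orthonormal frame $X_1,\dots,X_{2n}$ of $\g{H}$, the frame $f^{-1/2}X_{\alfa}$ is $\widetilde{g}$-orthonormal, and since the curvature operator is tensorial in its first two and in its last argument one gets
\[
\widetilde{K}=\frac{1}{f}\sum_{\alfa,\beta=1}^{2n} g\bigl(\widetilde{R}(X_{\alfa},X_{\beta})X_{\alfa},X_{\beta}\bigr).
\]
Writing $a_W=\widetilde{\nabla}_W-\nabla_W$ and expanding $\widetilde{\nabla}=\nabla+a$ inside the definition of the curvature, one obtains the general identity
\[
\widetilde{R}(X,Y)Z=R(X,Y)Z+(\nabla_X a)_Y Z-(\nabla_Y a)_X Z+a_{T(X,Y)}Z+a_X a_Y Z-a_Y a_X Z,
\]
in which the torsion term $a_{T(X,Y)}$ survives precisely because $\nabla$ is not torsion-free.

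This already exhibits the three kinds of contributions to expect. The $\nabla a$ terms are linear in the second derivatives of $f$ and will produce the sublaplacian $\Delta f$; the commutator $a_Xa_Y-a_Ya_X$ is quadratic in $df$; and the torsion term couples the horizontal part of $a$ to the Reeb part, since the $\g{T}$-component of $T(X_{\alfa},X_{\beta})$ equals $\sum_j g(X_{\alfa},J_{\theta^j}X_{\beta})T_{\theta^j}$, as computed above. I would then substitute the explicit expressions from the two propositions, namely $a_X Y=\frac{Xf}{2f}Y+\frac{1}{f}c_X Y$ in the horizontal directions and the corresponding expression for $a_{T_{\theta}}$ extracted from the formula for $\widetilde{\nabla}_{\widetilde{T}_{\theta}}$ in the Reeb directions, and carry out the contraction. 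The decisive structural facts are that the corrections $c_X$ and $\gamma_T$ are antisymmetric and commute with every $J_{\theta}$, and that $\nabla$ makes $g$ parallel and satisfies $Q_{\theta}=0$ and $\nabla_T J_{\theta}=0$ (Proposition \ref{PropConnessioneH} and Proposition \ref{PropConnessioneT}); together with the Clifford relations and the trace identities $\operatorname{tr}J_{\theta}=0$ and $\operatorname{tr}(J_{\theta^i}J_{\theta^j})=-2n\bra\theta^i,\theta^j\ket$, these force every contracted term that is neither a multiple of $\Delta f/f$, of $|df|^2/f^2$, nor of $K$ to vanish, and in particular kill the terms linear in $df$ by the antisymmetry of the curvature. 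The homogeneous dimension $Q=2n+2k$ is expected to emerge here, from adding the $2n$ horizontal contribution of the $\frac{Xf}{2f}Y$ term to the Reeb contributions carried through the torsion term by the $k$ forms $\theta^j$. The result is an identity of the form
\[
\widetilde{K}=\frac{1}{f}\left(K+A\,\frac{\Delta f}{f}+B\,\frac{|df|^2}{f^2}\right)
\]
for explicit constants $A,B$ depending only on $G$.

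Finally I would insert $f=u^{\frac{4}{Q-2}}$. Writing $p=\frac{4}{Q-2}$, one has $\Delta f=p\,u^{p-1}\Delta u+p(p-1)u^{p-2}|\nabla u|^2$ and $|df|^2=p^2u^{2(p-1)}|\nabla u|^2$ (with $|\nabla u|^2$ the squared horizontal gradient), so that $\frac{1}{f}\bigl(A\,\Delta f/f+B\,|df|^2/f^2\bigr)$ becomes $Ap\,u^{-(p+1)}\Delta u$ plus the multiple $\bigl(Ap(p-1)+Bp^2\bigr)u^{-(p+2)}|\nabla u|^2$ of the gradient term. Since $-(p+1)=-\frac{Q+2}{Q-2}$ and $u^{-p}K=u^{-\frac{Q+2}{Q-2}}\cdot Ku$, the asserted shape follows as soon as the gradient coefficient $Ap(p-1)+Bp^2$ vanishes, with $C=C_G=-Ap$. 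The crux is therefore that $p=\frac{4}{Q-2}$ is exactly the exponent for which the relation between $A$ and $B$ forced by the geometry, $p(A+B)=A$, makes $Ap(p-1)+Bp^2$ vanish, so that the unwanted $|\nabla u|^2$ term disappears at the Yamabe exponent. The main obstacle is the long tensorial contraction of the second step: organizing the terms coming from $\nabla a$, from the commutator, and from the Reeb-coupled torsion term, and checking, via the antisymmetry, the $J_{\theta}$-commutation and the Clifford trace identities, both that the non-scalar pieces cancel and that the surviving coefficients $A,B$ combine into the homogeneous dimension in such a way that the gradient cancellation occurs exactly at $p=\frac{4}{Q-2}$.
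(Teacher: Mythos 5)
Your main computation coincides with the paper's: expand $a=\widetilde{\nabla}-\nabla$ inside the double contraction of the curvature (the paper does this in a frame with $\nabla_{X_\alfa}X_\beta=0$ at a point rather than through your tensorial identity for $\widetilde{R}-R$, which is equivalent), feed in the two propositions, and sort the result into a $K$ term, a Hessian term and a gradient term. Your frame normalization $\widetilde{K}=\frac{1}{f}\sum g(\widetilde{R}(X_\alfa,X_\beta)X_\alfa,X_\beta)$ is the consistent one (it is what the theorem's statement requires, since $\frac{1}{f}K=u^{-\frac{Q+2}{Q-2}}Ku$). One secondary inaccuracy: the terms linear in $df$, which couple $df$ to the background torsion and to $\nabla J$ (e.g.\ the contraction of $c_{(\pi_{\g{H}}T)(X_\alfa,X_\beta)}X_\alfa$ against $X_\beta$), are not killed by ``antisymmetry of the curvature'' and Clifford traces alone; they vanish because of the normalization conditions built into the connection, namely that $\pi_{\g{H}}T$ is orthogonal to $\Sigma(\g{H})$ and that the partial torsion along $\g{T}$ is orthogonal to $\Xi(\g{H})$.

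The genuine gap is the endgame. After reaching
$$\widetilde{K}=\frac{1}{f}\left(K+A\,\frac{\Delta f}{f}+B\,\frac{|df|^2}{f^2}\right),$$
the entire quantitative content of the theorem is the identity $\frac{A}{A+B}=\frac{4}{Q-2}$ — your condition $p(A+B)=A$ at $p=\frac{4}{Q-2}$ — which is what makes the gradient term cancel at the Yamabe exponent and makes $Q=2n+2k$ appear. You assert this relation is ``forced by the geometry'' and ``expected to emerge,'' but you never derive it, and your plan explicitly defers the computation of $A$ and $B$ that would be needed to check it; as written, your argument yields the formula only for some exponent determined by the unknown ratio $A/B$. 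The paper closes this point differently, and this is the idea your proposal is missing: it does not fix the power of $u$ in advance, but sets $u=f^{C/A+1}$ (i.e.\ $f=u^{A/(A+C)}$), so that the gradient term is annihilated by construction, whatever the values of the constants; applying the resulting identity on the model group $G$, where $K=0$, it obtains $C_1\Delta u=\widetilde{K}\,u^{q}$, and then identifies $q=\frac{Q+2}{Q-2}$ by requiring covariance of this equation under the dilations $\delta_\lambda$ of $G$, exactly as the critical exponent is identified in the Riemannian and CR cases. To complete your proof you must either carry out the explicit trace computation of $A$ and $B$ (including the Reeb contributions entering through $a_{T(X,Y)}$), or replace your fixed-exponent step by the paper's substitution-plus-dilation argument.
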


\begin{proof}
 Let $\overline{x}\in M$, and let $X_1,\ldots,X_{2n}$ an orthonormal frame for $g$ such that $\nabla_{X_{\alfa}}X_{\beta}=0$ at $\overline{x}$. Then
 $$[X_{\alfa},X_{\beta}](\overline{x}) = -T(X_{\alfa},X_{\beta})(\overline{x})$$
 and therefore
 $$\widetilde{K} =\widetilde{g}(\widetilde{R}(X_{\alfa},X_{\beta})X_{\alfa},X_{\beta}) = g(\widetilde{\nabla}_{X_{\alfa}}\widetilde{\nabla}_{X_{\beta}}X_{\alfa}-\widetilde{\nabla}_{X_{\beta}}\widetilde{\nabla}_{X_{\alfa}}X_{\alfa}-\widetilde{\nabla}_{[X_{\alfa},X_{\beta}]}X_{\alfa},X_{\beta}) =$$
 $$= fg\left(\nabla_{X_{\alfa}}\nabla_{X_{\beta}}X_{\alfa} + \frac{X_{\alfa}X_{\beta}f}{2f}X_{\alfa} - \frac{X_{\alfa}fX_{\beta}f}{4f^2}X_{\alfa} + \frac{X_{\beta}f}{2f}\nabla_{X_{\alfa}}X_{\alfa} +  \frac{X_{\alfa}f}{2f}\nabla_{X_{\beta}}X_{\alfa}+\right.$$
 $$ + c_{\nabla_{X_{\alfa}}X_{\beta}}X_{\alfa} + 2c_{X_{\alfa}}(\nabla_{X_{\beta}}X_{\alfa})+ \frac{X_{\alfa}f}{2f}c_{X_{\beta}}X_{\alfa} + \frac{X_{\beta}f}{2f}c_{X_{\alfa}}X_{\alfa} + c_{X_{\alfa}}(c_{X_{\beta}}X_{\alfa}) +$$
 $$-\left( \nabla_{X_{\beta}}\nabla_{X_{\alfa}}X_{\alfa} + \frac{X_{\beta}X_{\alfa}f}{2f}X_{\alfa} - \frac{X_{\beta}fX_{\alfa}f}{4f^2}X_{\alfa} + \frac{X_{\alfa}f}{2f}\nabla_{X_{\beta}}X_{\alfa} +  \frac{X_{\beta}f}{2f}\nabla_{X_{\alfa}}X_{\alfa} +\right.$$
 $$\left.+ c_{\nabla_{X_{\beta}}X_{\alfa}}X_{\alfa} + 2c_{X_{\beta}}(\nabla_{X_{\alfa}}X_{\alfa}) + \frac{X_{\beta}f}{2f}c_{X_{\alfa}}X_{\alfa} + \frac{X_{\alfa}f}{2f}c_{X_{\beta}}X_{\alfa} + c_{X_{\beta}}(c_{X_{\alfa}}X_{\alfa})\right) +$$
 $$+\nabla_{(\pi_{\g{H}}T)(X_{\alfa},X_{\beta})}X_{\alfa} + \frac{(\pi_{\g{H}}T)(X_{\alfa},X_{\beta})f}{2f}X_{\alfa} + c_{(\pi_{\g{H}}T)(X_{\alfa},X_{\beta})}X_{\alfa} +$$
 $$ +\nabla_{(\pi_{\g{T}}T)(X_{\alfa},X_{\beta})}X_{\alfa} + \frac{(\pi_{\g{T}}T)(X_{\alfa},X_{\beta})f}{2f}X_{\alfa} +$$
 $$\left.+\frac{1}{2f}\sum_{i=1}^k(\nabla_{(\pi_{\g{T}}T)(X_{\alfa},X_{\beta})}J_{\theta^i})J_{\theta^i}X_{\alfa} +  \frac{1}{f}\gamma_{(\pi_{\g{T}}T)(X_{\alfa},X_{\beta})}X_{\alfa}, X_{\beta}\right)=$$
 $$= fg\left(R(X_{\alfa},X_{\beta})X_{\alfa}  + \frac{X_{\alfa}X_{\beta}f}{2f}X_{\alfa} +\right.$$
 $$ + \frac{X_{\alfa}f}{2f}c_{X_{\beta}}X_{\alfa} + \frac{X_{\beta}f}{2f}c_{X_{\alfa}}X_{\alfa} + c_{X_{\alfa}}(c_{X_{\beta}}X_{\alfa}) +$$
 $$-\left( \frac{X_{\beta}X_{\alfa}f}{2f}X_{\alfa} + \frac{X_{\beta}f}{2f}c_{X_{\alfa}}X_{\alfa} + \frac{X_{\alfa}f}{2f}c_{X_{\beta}}X_{\alfa} + c_{X_{\beta}}(c_{X_{\alfa}}X_{\alfa})\right) +$$
 $$\left. + \frac{(\pi_{\g{T}}T)(X_{\alfa},X_{\beta})f}{2f}X_{\alfa} + \frac{1}{2f}\sum_{i=1}^k(\nabla_{(\pi_{\g{T}}T)(X_{\alfa},X_{\beta})}J_{\theta^i})J_{\theta^i}X_{\alfa} +  \frac{1}{f}\gamma_{(\pi_{\g{T}}T)(X_{\alfa},X_{\beta})}X_{\alfa}, X_{\beta}\right)=$$
 $$= fK+ fg\left(c_{X_{\alfa}}(c_{X_{\beta}}X_{\alfa}) -c_{X_{\beta}}(c_{X_{\alfa}}X_{\alfa}) +c_{(\pi_{\g{H}}T)(X_{\alfa},X_{\beta})}X_{\alfa}, X_{\beta}\right.$$
 $$\left.+\frac{1}{2f}\sum_{i=1}^k(\nabla_{(\pi_{\g{T}}T)(X_{\alfa},X_{\beta})}J_{\theta^i})J_{\theta^i}X_{\alfa} +  \frac{1}{f}\gamma_{(\pi_{\g{T}}T)(X_{\alfa},X_{\beta})}X_{\alfa}, X_{\beta}\right)=$$
 $$= fK+ \frac{C}{f}|\nabla^Hf|^2 + \sum_{\alfa,\beta}C_{\alfa\beta}\nabla^2_{X_{\alfa},X_{\beta}}f$$
 for some constants and $C_{\alfa\beta}$ which are purely algebraic, and therefore can be computed on $G$.
 
 First, necessarily $\sum_{\alfa,\beta}C_{\alfa\beta}\nabla^2_{X_{\alfa},X_{\beta}}f = A\Delta$ for some costant $A$.
 Calling $u=f^{C/A+1}$ is it easy to verify that, since on $G$ $K=0$,
 $$C_1\Delta u = \widetilde{K}u^{C/A+1}.$$
 Since the equation must be covariant by dilations, similarly to the Riemannian and to the CR case, the exponent must be the critical one, $\frac{Q+2}{Q-2}$.
\end{proof}

Similarly to Riemannian and CR geometry, it is interesting to consider the problem of imposing constant curvature.
Define
$$Q(u) = \frac{\int_M C|\nabla^{\g{H}}u|^2 + Ku^2}{\left(\int_M u^{\frac{2Q}{Q-2}}\right)^{\frac{Q-2}{Q}}}$$
and
$$\ci{Y}(M) = \inf_{u>0}Q(u).$$
Then, by following the same proof as Riemannian or CR geometry (see \cite{LP} and \cite{JL1}) the following theorem can be proved.

\begin{teorema}
 Every compact contact manifold of Heisenberg type $M$ modelled on $G$ satisfies $\ci{Y}(M)\le\ci{Y}(G)$, and if the inequality is strict, there exists a conformal charge with constant scalar curvature.
\end{teorema}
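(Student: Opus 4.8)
The plan is to adapt the classical resolution of the Yamabe problem in Riemannian geometry (Yamabe--Trudinger--Aubin; see \cite{LP}) and its CR analogue (Jerison and Lee, \cite{JL1}), working throughout with the subelliptic operator $\Delta$ and the associated Folland--Stein Sobolev spaces adapted to $\g{H}$. The argument splits into two independent parts: first the inequality $\ci{Y}(M)\le\ci{Y}(G)$, which is soft and follows from a concentration test-function computation; and second the implication that strict inequality forces the infimum to be attained, which is the substantial part and rests on a subcritical approximation together with a blow-up analysis.

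For the inequality I would proceed by transplantation. Fix a point $\overline{x}\in M$ and use the local model structure to identify a neighbourhood of $\overline{x}$ with a neighbourhood of the origin in $G$, in such a way that at $\overline{x}$ the contact structure of Heisenberg type agrees with the flat one of $G$ to first order. Transplant the Garofalo--Vassilev extremal $U$ of \eqref{SoluzioneGarofaloLanconelli}, rescaled by the dilations $\delta_{\lambda}$ and multiplied by a fixed cutoff supported near $\overline{x}$, to obtain a family of test functions $u_{\lambda}$ on $M$. As $\lambda\to 0$ the mass of $u_{\lambda}$ concentrates at $\overline{x}$, the zeroth-order term $\int_M Ku_{\lambda}^2$ becomes negligible against the gradient term by homogeneity, and the local geometry seen by $u_{\lambda}$ converges to that of $G$; hence $Q(u_{\lambda})\to\ci{Y}(G)$, which gives $\ci{Y}(M)\le\ci{Y}(G)$.

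For the existence under strict inequality I would use the subcritical method. For $2<p<\frac{2Q}{Q-2}$ set
$$Q_p(u)=\frac{\int_M C|\nabla^{\g{H}}u|^2+Ku^2}{\left(\int_M u^p\right)^{2/p}},\qquad \ci{Y}_p=\inf_{u>0}Q_p(u).$$
In this subcritical range the Folland--Stein embedding into $L^p$ is compact, so the direct method produces a minimiser $u_p$, which after normalising $\int_M u_p^p=1$ may be taken nonnegative; subelliptic regularity (hypoellipticity of $\Delta$ together with subelliptic Schauder and Harnack estimates) makes $u_p$ smooth and strictly positive, and it solves the subcritical Euler--Lagrange equation $-C\Delta u_p+Ku_p=\ci{Y}_p\,u_p^{p-1}$. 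One checks that $\ci{Y}_p\to\ci{Y}(M)$ as $p\to\frac{2Q}{Q-2}$.

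The crux is to pass to the limit $p\to\frac{2Q}{Q-2}$, and here the strict inequality $\ci{Y}(M)<\ci{Y}(G)$ enters to rule out concentration. The expected obstacle is the blow-up analysis: one argues that if $\N{u_p}_{L^{\infty}}$ were unbounded, then rescaling $u_p$ by the dilations $\delta_{\lambda}$ around the concentration points and using that $M$ is locally modelled on $G$ would produce, in the limit, a nonzero nonnegative solution of the critical equation \eqref{Equazione} on the model $G$. By Yang's classification \cite{Y} this limiting bubble carries Yamabe energy exactly $\ci{Y}(G)$, forcing $\ci{Y}(M)\ge\ci{Y}(G)$ and contradicting the hypothesis. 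Therefore $\N{u_p}_{L^{\infty}}$ stays bounded; subelliptic estimates then give uniform higher regularity, and a subsequence converges to a smooth positive $u$ solving the critical equation $-C\Delta u+Ku=\ci{Y}(M)\,u^{\frac{Q+2}{Q-2}}$. By Theorem \ref{TeoremaFormulaConforme} the conformal change with factor $f=u^{\frac{4}{Q-2}}$ has constant scalar curvature, completing the proof. The genuinely hard points, as in the Riemannian and CR settings, are the uniform $L^{\infty}$ bound, i.e.\ the exclusion of energy concentration below the threshold $\ci{Y}(G)$, and the subelliptic regularity theory underlying both the existence of the subcritical minimisers and the passage to the limit.
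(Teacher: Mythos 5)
Your proposal is correct and follows essentially the same route as the paper, which gives no details of its own but simply asserts that the theorem follows "by the same proof as Riemannian or CR geometry" with references to \cite{LP} and \cite{JL1} — precisely the scheme you sketch (concentration of test functions modelled on the Garofalo--Vassilev extremal for $\ci{Y}(M)\le\ci{Y}(G)$, then subcritical approximation plus exclusion of blow-up under strict inequality). Your write-up is in fact more explicit than the paper's, and correctly identifies the genuinely hard points (the uniform bound ruling out concentration, and the subelliptic regularity theory) that the paper leaves implicit in its citation.
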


We notice that $\ci{Y}(\bm{S}_G)=\ci{Y}(G)$, while we do not now whether in the case of $G$ not of Iwasawa type, there exists a compact manifold such that $\ci{Y}(M)=\ci{Y}(G)$.

\textsc{Claudio Afeltra, University of Montpellier,  
	Institut Montpelliérain  Alexander Grothendieck.
	Place Eug\`ene Bataillon, 
	34090 Montpellier,  France}

\textit{Email address}:  \texttt{claudio.afeltra@umontpellier.it}

\end{document}